\newtheorem{theorem}{Theorem}[section]
\newtheorem{lemma}[theorem]{Lemma}
\newtheorem{proposition}[theorem]{Proposition}
\theoremstyle{definition}
\newtheorem{definition}[theorem]{Definition}
\newtheorem{remark}[theorem]{Remark}
\newtheorem{conjecture}[theorem]{Conjecture}
\newtheorem{Algorithm}[theorem]{Algorithm}
\numberwithin{equation}{section}
\newcommand{\CC}{\mathbb C}
\newcommand{\HH}{\mathbb H}
\newcommand{\NN}{\mathbb N}
\newcommand{\RR}{\mathbb R}
\newcommand{\ZZ}{\mathbb Z}
\newcommand{\cO}{\mathcal O}
\newcommand{\SL}{\mathop{\mathrm {SL}}\nolimits}
\newcommand{\orb}{\operatorname{orb}}
\newcommand{\w}{\operatorname{w}}
\begin{document}

\title[]{Weyl invariant $E_8$ Jacobi forms and $E$-strings}

\author{Kaiwen Sun}

\address{Korea Institute for Advanced Study, 85 Hoegiro, Dongdaemun-gu, Seoul, South Korea}

\email{ksun@kias.re.kr}

\author{Haowu Wang}

\address{Center for Geometry and Physics, Institute for Basic Science (IBS), Pohang 37673, Korea}

\email{haowu.wangmath@gmail.com}

\subjclass[2020]{11F50, 17B22, 81T30}

\date{\today}

\keywords{Jacobi forms, $E_8$ root system, Weyl groups, $E$-strings}

\begin{abstract}
In 1992 Wirthm\"{u}ller showed that for any irreducible root system not of type $E_8$ the ring of weak Jacobi forms invariant under Weyl group is a polynomial algebra. However, it has recently been proved that for $E_8$ the ring is not a polynomial algebra. Weyl invariant $E_8$ Jacobi forms have many applications in string theory and it is an open problem to describe such forms. The scaled refined free energies of $E$-strings with certain $\eta$-function factors are conjectured to be Weyl invariant $E_8$ quasi holomorphic Jacobi forms. It is further observed that the scaled refined free energies up to some powers of $E_4$ can be written as  polynomials in nine Sakai's $E_8$ Jacobi forms and Eisenstein series $E_2$, $E_4$, $E_6$. Motivated by the physical conjectures, we prove that for any Weyl invariant $E_8$ Jacobi form $\phi_t$ of index $t$ the function $E_4^{[t/5]}\Delta^{[5t/6]}\phi_t$ can be expressed uniquely as a polynomial in $E_4$, $E_6$ and Sakai's forms, where $[x]$ is the integer part of $x$. This means that a Weyl invariant $E_8$ Jacobi form is completely determined by a solution of some linear equations. By solving the linear systems, we determine the generators of the free module of Weyl invariant $E_8$ weak (resp. holomorphic) Jacobi forms of given index $t$ when $t\leq 13$ (resp. $t\leq 11$). 
\end{abstract}

\maketitle

\section{introduction}
In 1985 Eichler and Zagier introduced the theory of Jacobi forms in their monograph \cite{EZ85}. Jacobi forms are an elegant intermediate between different types of modular forms and have many applications in mathematics and physics. In 1992 Wirthm\"{u}ller \cite{Wir92} investigated Weyl invariant Jacobi forms associated with root systems. Let $R$ be an irreducible root system of rank $r$. A $W(R)$-invariant Jacobi form is a holomorphic function of complex variables $\tau \in \HH$ and $\mathfrak{z}\in R\otimes \CC$ which is modular in $\tau$ and quasi-periodic in $\mathfrak{z}$ and is invariant under the action of the Weyl group $W(R)$ on the lattice variable $\mathfrak{z}$. All $W(R)$-invariant weak Jacobi forms of integral weight and integral index form a bigraded algebra $J_{*,R,*}^{\w, W(R)}$ over $\CC$. When $R$ is not of type $E_8$, Wirthm\"{u}ller showed that $J_{*,R,*}^{\w, W(R)}$ is a polynomial algebra generated by $r+1$ Jacobi forms over the ring $M_*(\SL_2(\ZZ))=\CC[E_4, E_6]$ of $\SL_2(\ZZ)$-modular forms. For example, the algebra of $W(A_1)$-invariant weak Jacobi forms, that is, the ring of even-weight weak Jacobi forms introduced by Eichler and Zagier, is freely generated by two forms of index $1$ and weight $0$ and $-2$. 
Recently, the second named author proved in \cite{Wan21a} that the ring of $W(E_8)$-invariant weak Jacobi forms is not a polynomial algebra. In other words, there exist some algebraic relations among generators. It is still unknown if this ring is finitely generated. In the study on the Seiberg--Witten curve of $E$-string theory \cite{Eguchi:2002gjr}, Sakai \cite{Sak17} constructed nine $W(E_8)$-invariant holomorphic Jacobi forms denoted $A_1$, $A_2$, $A_3$, $A_4$, $A_5$, $B_2$, $B_3$, $B_4$, $B_6$. The forms $A_i$ have weight $4$ and index $i$, and they reduce to the Eisenstein series $E_4$ when $\mathfrak{z}=0$. The forms $B_j$ have weight $6$ and index $j$, and they reduce to $E_6$ when $\mathfrak{z}=0$. In \cite{Sak19} Sakai conjectured that for any $W(E_8)$-invariant Jacobi form $\phi$ there exists a $\SL_2(\ZZ)$-modular form $f(\tau)$ such that the product $f\phi$ can be written as a polynomial in these $A_i$, $B_j$ and $E_4$, $E_6$. This conjecture was proved by the second named author in \cite{Wan21a}. In this paper we will determine the best possible $f$ for arbitrary index and give some further applications. Our description is inspired by some conjectures in string theory. Let us briefly introduce the physical background.

The $E$-string theory is a typical 6d $(1,0)$ superconformal field theory (SCFT) with $E_8$ flavor symmetry \cite{Witten:1995gx,Seiberg:1996vs,Ganor:1996mu}. In the $S^1/\mathbb{Z}_2$ compactification of M-theory, $m$ $E$-strings are realized by $m$ M2-branes stretched between a M5-brane and a M9-brane. The bound state of $m$ $E$-strings are captured by topologically twisted 4d $\mathcal{N}=4$ $U(m)$ Yang-Mills theories on half K3, which is an elliptic surface realized as $\mathbb{P}^2$ with nine points blown up \cite{Klemm:1996hh,Minahan:1998vr}. From the view point of Calabi--Yau geometry, the $E$-string theory is equivalent to topological string theory on local half K3 Calabi--Yau threefold. By $S^1$ compactification, $E$-string theory gives marginal 5d $\mathcal{N}=1$ $SU(2)$ gauge theory with eight fundamentals, which can flow to almost all 5d rank-one SCFTs \cite{Seiberg:1996bd}. $E$-string theory is also closely connected to van Diejen integrable model and elliptic Painlev\'e system \cite{Nazzal:2018brc,noumi2020elliptic}. All these relations make $E$-string theory the mother of almost all ``genus one'' theories in the sense of mirror/spectral curves, or ``rank one'' theories in the sense of Coulomb branch dimension. 

A general spacetime setting which preserves the supersymmetry for $E$-string theory is the so called 6d Omega background $(\mathbb{C}^2\times T^2)_{\epsilon_1,\epsilon_2}$, where each $\mathbb{C}$ plane is rotated by $z_i\to e^{\epsilon_i}z_i$ around the cycles of the torus. For such system, the $\epsilon_{1,2}$ expansion of the total instanton free energy $F=\sum_{n,g,m\ge0}Q^m(-\epsilon_1\epsilon_2)^{g-1}(\epsilon_1+\epsilon_2)^{2n}F_{(n,g,m)}$ defines the refined free energies of $E$-strings, where $Q$ counts the number of strings. The situation when keeping all $E_8$ flavor fugacities $\mathfrak{z}$ is often called massive, while turning $\mathfrak{z}=0$ massless. It is generally conjectured that the refined free energies $F_{(n,g,m)}$ of $m$ $E$-strings in the massive case are $W(E_8)$-invariant quasi Jacobi forms of index $m$. The scaled refined free energies $( \frac{\eta^{12}}{\sqrt{q}} )^m F_{(n,g,m)}$ of $E$-strings in the massless case were found to be quasi modular forms of weight $2(n+g)+6m-2$ on $\SL_2(\ZZ)$ and thus can be expressed as polynomials in $E_2$, $E_4$ and $E_6$. It is then natural to guess that the scaled refined free energies $( \frac{\eta^{12}}{\sqrt{q}} )^m F_{(n,g,m)}$ in the massive case can be written as polynomials in $E_2$, $E_4$, $E_6$, $A_i$ and $B_j$. The process to fix all the polynomial coefficients to determine $F_{(n,g,m)}$ is called modular bootstrap and the ansatz is called modular ansatz. In \cite{HKP13}, Huang, Klemm and Poretschkin developed the refined modular anomaly equation to efficiently compute $F_{(n,g,m)}$ of $E$-strings. They found that the naive modular ansatz is true for index $m\leq 4$, but usually not true for $m\geq 5$. Indeed, they found some $(\frac{\eta^{12}}{\sqrt{q}})^5 F_{(n,g,5)}$ which cannot be expressed as polynomials in Sakai's nine forms unless further multiplied by $E_4$. Later Del Zotto, Gu, Huang, Kashani-Poor, Klemm and Lockhart \cite{ZGH+18} discovered an exceptional $W(E_8)$-invariant holomorphic Jacobi form of weight $16$ and index $5$ defined by the polynomial
\begin{equation}\label{eq:P5}
    P_{16,5}= 864 A_1^3 A_2 + 3825 A_1 B_2^2 - 770 A_3 B_2 E_6 - 840 A_2 B_3 E_6 + 60 A_1 B_4 E_6 + 21 A_5 E_6^2.
\end{equation}
They checked numerically that $P_{16,5}$ vanishes at the zero points of $E_4$ for general lattice variable $\mathfrak{z}$ and then conjectured that the quotient $P_{16,5}/E_4$ is holomorphic. They did not find other similar polynomials,  so they further conjectured that any Jacobi form expressed as a polynomial in $A_i$, $B_j$ and $E_6$ which vanishes at the zero points of $E_4$ must be divisible by the above polynomial. In this paper we will prove their conjectures. 

In a similar manner, Sakai's nine $A_i,B_j$ forms are also used in the modular bootstrap of the elliptic genera of $E$-strings and $E_8\times E_8$ Heterotic strings \cite{Haghighat:2014pva,Cai:2014vka,Gu:2017ccq}. One of the main goals of the current work is to establish a rigorous foundation for modular bootstrap whenever $E_8$ symmetry, no matter flavor or gauge, is involved.

In \cite{Wan21b} the second named author established the modular Jacobian criterion to give an automorphic proof of Wirthm\"{u}ller's theorem. We will use this approach and the distinguished Jacobi form $P_{16,5}/E_4$ to prove the following theorem, which gives a full description of $W(E_8)$-invariant Jacobi forms in terms of Sakai's forms. 

\begin{theorem}\label{th:Main}
\noindent
\begin{enumerate}
    \item The quotient ${P_{16,5}}/{E_4}$ is a $W(E_8)$-invariant holomorphic Jacobi form of weight $12$ and index $5$. 
    \item For any $W(E_8)$-invariant Jacobi form $P\in \CC[E_6, A_1,A_2,B_2,A_3,B_3,A_4,B_4,A_5,B_6]$, if ${P}/{E_4}$ is holomorphic on $\HH\times (E_8\otimes\CC)$, then 
    $$
    \frac{P}{P_{16,5}} \in \CC[E_6, A_1,A_2,B_2,A_3,B_3,A_4,B_4,A_5,B_6].
    $$
    \item Every $W(E_8)$-invariant weak Jacobi form of index $t$ can be expressed uniquely as 
    \begin{equation}\label{eq:universal}
        \frac{\sum_{j=0}^{t_1} P_j E_4^j P_{16, 5}^{t_1-j} }{\Delta^{N_t}E_4^{t_1}},
    \end{equation}
    where 
    \begin{itemize}
        \item[(i)] $t_1$ is the integer part of ${t}/{5}$;
        \item[(ii)] $P_{t_1}\in \CC[E_4,E_6,A_1,A_2,B_2,A_3,B_3,A_4,B_4,A_5,B_6]$;
        \item[(iii)] $P_j\in \CC[E_6,A_1,A_2,B_2,A_3,B_3,A_4,B_4,A_5,B_6]$ for $0\leq j<t_1$;
        \item[(iv)] $N_t$ is defined as follows
        \begin{equation*}
        N_t = \begin{cases}
        5t_0,   &\text{if $t=6t_0$ or $6t_0+1$},\\
        5t_0+1,   &\text{if $t=6t_0+2$},\\
        5t_0+2,   &\text{if $t=6t_0+3$},\\
        5t_0+3,   &\text{if $t=6t_0+4$ or $6t_0+5$}.\\
      \end{cases}    
\end{equation*}
    \end{itemize}
\end{enumerate}
\end{theorem}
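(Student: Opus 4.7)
I would prove the three claims sequentially, each using the preceding.

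For (1), $E_4$ vanishes on $\HH$ precisely along the $\SL_2(\ZZ)$-orbit of $\rho := e^{2\pi i/3}$, with simple zeros, so $P_{16,5}/E_4$ is holomorphic on $\HH\times(E_8\otimes\CC)$ if and only if $P_{16,5}(\rho,\mathfrak z)\equiv 0$ identically in $\mathfrak z$, modular covariance of $P_{16,5}$ reducing the check to this single representative. The plan is to expand each Sakai form $A_i(\rho,\cdot)$, $B_j(\rho,\cdot)$ in a fixed basis of $W(E_8)$-invariant theta functions on the CM abelian variety $(E_8\otimes\CC)/(E_8+\rho E_8)$ and verify the resulting polynomial identity directly; the numerical evidence in \cite{ZGH+18} is already consistent with this vanishing, and finite-dimensionality of $W(E_8)$-invariant Jacobi forms of bounded weight and index reduces the check to a finite linear-algebra problem.

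For (2), regard $R := \CC[E_6,A_1,A_2,B_2,A_3,B_3,A_4,B_4,A_5,B_6]$ as a bigraded polynomial ring (by weight and index), and let $I \subseteq R$ denote the bigraded ideal of polynomials $P$ such that $P/E_4$ is holomorphic. Part (1) yields the inclusion $(P_{16,5}) \subseteq I$. The reverse inclusion is the technical core, and I would establish it by a Hilbert-series comparison: evaluation at $\tau=\rho$ defines a bigraded map $\mathrm{ev}_\rho : R \to \mathcal{T}$ (with $\mathcal{T}$ the bigraded ring of $W(E_8)$-invariant theta functions on the CM fibre), whose kernel is exactly $I$. The plan is to compute the bigraded Hilbert series of $R/(P_{16,5})$ from the Koszul resolution of the principal ideal (factor $(1-x^{16}y^5)$ against $H_R(x,y)$) and to compare it with $\dim\mathcal{T}_{w,t}$ computed from an explicit theta basis; agreement in every bidegree forces $R/(P_{16,5}) \hookrightarrow \mathcal{T}$, hence $I=(P_{16,5})$.

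For (3), I would combine (1)-(2) with the main result of \cite{Wan21a}, which provides for any Weyl invariant weak Jacobi form $\phi_t$ of index $t$ some modular form $f \in \CC[E_4,E_6]$ and $F \in R[E_4]$ with $f\phi_t = F$, and then optimize $f$ via two independent reductions. The $\Delta$-exponent $N_t$ is fixed by analyzing the $q$-order at the cusp of both $\phi_t$ and the Sakai monomials, with the residue-class formula for $N_t$ tracking which combinations of generator weights and indices can compensate each $t \pmod 6$. The $E_4$-exponent is optimized by writing $F = \sum_{k\geq 0} E_4^k Q_k$ with $Q_k \in R$ and applying (2) iteratively: whenever the lowest $Q_0$ satisfies $Q_0/E_4$ holomorphic, (2) rewrites $Q_0 = Q_0' P_{16,5}$, cancels an $E_4$, and advances the $P_{16,5}$-content by index $5$; the process terminates after exactly $t_1 = [t/5]$ steps, yielding \eqref{eq:universal}. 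Uniqueness follows from (2) applied to the difference of any two such expressions.

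The main obstacle is part (2): it is a global structural claim that $P_{16,5}$ generates the \emph{entire} ideal of polynomial relations forcing $E_4$-divisibility, not merely one element of it, and pinning it down requires matching two infinite families of dimensions (polynomial Hilbert data from $R/(P_{16,5})$ against theta-function dimensions on the CM fibre) across all bidegrees simultaneously. Parts (1) and (3) reduce to a finite verification and an algorithmic procedure, respectively, once (2) is available.
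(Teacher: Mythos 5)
Your proposal is a plausible research plan but it has genuine gaps at each of the three parts, and in particular part (2) --- which you correctly identify as the core --- is not actually proved. For (1), the reduction to checking $P_{16,5}(\rho,\mathfrak z)\equiv 0$ at $\rho=e^{2\pi i/3}$ is sound, but ``expand the Sakai forms in a theta basis on the CM fibre and verify the identity'' is a description of a computation, not the computation; nothing pins down that basis or carries out the linear algebra. The paper avoids the CM point entirely: it constructs an auxiliary holomorphic form $\widehat B_5$ of weight $6$ and index $5$ and proves the exact identity $179712\,\Delta E_4\widehat B_5=E_6P_{16,5}+E_4Q_{18,5}$ by comparing Fourier coefficients up to $q^4$ and invoking the bound that a non-zero weak form of index $5$ has weight $\geq -18$; since $E_4$ and $E_6$ have no common zeros, $E_4\mid P_{16,5}$ follows. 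For (2), your Hilbert-series comparison is circular as stated: $\mathcal T$ is \emph{defined} as the image of $\mathrm{ev}_\rho$, so computing $\dim\mathcal T_{w,t}$ ``from an explicit theta basis'' presupposes knowing which theta functions on the CM fibre are hit by $R$ --- i.e.\ knowing the kernel $I$, which is exactly what is to be proved. You give no independent handle on $\dim\mathcal T_{w,t}$, and there is no reason offered why the image should fill out any a priori describable space. The paper's mechanism is completely different: it computes the modular Jacobian $J(A_1,\dots,A_5,\dots,B_6)=c\,\Delta^{14}E_4\,\Phi_{E_8}$ and observes that replacing $A_5$ by $\widehat A_5=P_{16,5}/E_4-21E_4^2A_5$ multiplies the Jacobian by $-21\cdot 1728\,\Delta/E_4$, producing a pure power $\Delta^{15}\Phi_{E_8}$; the Jacobian criterion of \cite{Wan21b} then yields the structural statement from which (2) follows. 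Nothing in your outline substitutes for this step.

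For (3), two points are underdetermined. First, the sharp value of $N_t$ cannot be extracted from a generic ``$q$-order at the cusp'' analysis of Sakai monomials: the paper needs the eight specific weight-$4$ weak forms of Lemma \ref{lem:q0}, whose $q^0$-terms are the individual fundamental Weyl orbits $\orb(w_i)$ and whose explicit denominators $\Delta,\Delta,\Delta,\Delta^2,\Delta^3,\Delta^3,\Delta^3,\Delta^5$ (for indices $2,2,3,3,4,4,5,6$) are precisely what generate the case formula for $N_t$ via \eqref{eq:q^0}; without constructing these forms the exponent is not sharp. Second, your iterative $E_4$-reduction assumes at each step that the lowest coefficient $Q_0$ satisfies ``$Q_0/E_4$ holomorphic,'' which is not justified and need not hold for an arbitrary presentation $f\phi_t=F$; the paper instead obtains the shape \eqref{eq:universal} in one stroke, since a monomial of index $t$ in the modified generators contains at most $t_1=[t/5]$ factors of $\widehat A_5=P_{16,5}/E_4-21E_4^2A_5$, which upon expansion produces exactly the denominator $E_4^{t_1}$ and the constraints (ii)--(iii). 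Your uniqueness argument is fine in substance (it reduces to algebraic independence of $E_4,E_6,A_i,B_j$ after clearing denominators), but it does not need part (2).
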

The powers $t_1$ and $N_t$ here are sharp. There exist $W(E_8)$-invariant Jacobi forms of arbitrary index which cannot be expressed in the form \eqref{eq:universal} if we replace $t_1$ or $N_t$ with any smaller integer. Besides,
our theorem implies that there is no $W(E_8)$-invariant Jacobi form which vanishes at the zero points of $E_6$ and lies in the ring
$\CC[E_4,A_1,A_2,B_2,A_3,B_3,A_4,B_4,A_5,B_6]$. This confirms the numerical search in \cite{ZGH+18}.

The function \eqref{eq:universal} is a weak Jacobi form if and only if the numerator has Fourier expansion of the form 
\begin{equation}\label{eq:system}
    \sum_{j=0}^{t_1} P_j E_4^j P_{16, 5}^{t_1-j} = O(q^{N_t}), \quad q=e^{2\pi i \tau}.
\end{equation}
We can find a basis of the space of $W(E_8)$-invariant weak Jacobi forms of given weight and given index by solving the system of linear equations defined by \eqref{eq:system}. In \cite{Wan21a} the second named author proved that the space $J_{*,E_8,t}^{\w, W(E_8)}$ of $W(E_8)$-invariant weak Jacobi forms of integral weight and given index $t$ is a free module over $M_*(\SL_2(\ZZ))$ whose rank is determined by a generating series. He also determined the generators of $J_{*,E_8,t}^{\w, W(E_8)}$ for $t\leq 4$ using the differential operators on Jacobi forms. In this paper we calculate the Fourier expansions of Sakai's forms up to $q^9$-terms. By solving linear systems of type \eqref{eq:system}, we figure out the structure of $J_{*,E_8,t}^{\w, W(E_8)}$ for $t\leq 13$. We also successfully determine the space of $W(E_8)$-invariant holomorphic Jacobi forms of arbitrary weight and index $t\leq 11$. The module $J_{*,E_8,t}^{\w, W(E_8)}$ is quite complicated when the index $t$ is large, such as the largest module $J_{*,E_8,13}^{\w, W(E_8)}$ we have determined has $364$ generators. 

The paper is organized as follows. In \S \ref{sec:def} we give a brief introduce of  $W(E_8)$-invariant Jacobi forms. \S \ref{sec:proof} is devoted to the proof of Theorem \ref{th:Main}. We calculate the generating series of $J_{*,E_8,t}^{\w, W(E_8)}$ for index $t\leq 13$ in \S \ref{sec:weak} and present their Laurent expansions in Appendix I.  We discuss $W(E_8)$-invariant holomorphic Jacobi forms in \S \ref{sec:holomorphic} and construct three exceptional generators in Appendix II. In \S \ref{sec: conj} we propose five conjectures on Jacobi forms and provide some evidence.

\section{Preliminaries}\label{sec:def}
In this section we define $W(E_8)$-invariant Jacobi forms. Let $\NN$ be the set of non-negative integers and $(-,-)$ be the standard scalar product on $\RR^8$.  Let $W(E_8)$ denote the Weyl group of $E_8$. We use the model, simple roots and fundamental weights of $E_8$ fixed in \cite[\S 3.1]{Wan21a}. We say that a vector $v\in E_8$ has norm $n$ if $(v,v)=2n$. 
\begin{definition}
Let $\varphi : \HH \times (E_8 \otimes \CC) \rightarrow \CC$ be a holomorphic function and $k\in \ZZ$, $t\in \NN$. If $\varphi$ satisfies the following properties
\begin{itemize}
\item[(i)] Weyl invariance:
\begin{equation*}
\varphi(\tau, \sigma(\mathfrak{z}))=\varphi(\tau, \mathfrak{z}), \quad \sigma\in W(E_8),
\end{equation*}
\item[(ii)] Quasi-periodicity:
\begin{equation*}
\varphi (\tau, \mathfrak{z}+ x \tau + y)= \exp\left(-t\pi i [ (x,x)\tau +2(x,\mathfrak{z}) ]\right) \varphi ( \tau, \mathfrak{z} ), \quad x,y\in E_8,
\end{equation*}
\item[(iii)] Modularity:
\begin{equation*}
\varphi \left( \frac{a\tau +b}{c\tau + d},\frac{\mathfrak{z}}{c\tau + d} \right) = (c\tau + d)^k \exp\left( t\pi i \frac{c(\mathfrak{z},\mathfrak{z})}{c \tau + d}\right) \varphi ( \tau, \mathfrak{z} ), \quad \left( \begin{array}{cc}
a & b \\ 
c & d
\end{array} \right)   \in \SL_2(\ZZ),
\end{equation*}
\item[(iv)] $\varphi ( \tau, \mathfrak{z} )$ has the Fourier expansion 
\begin{equation*}
\varphi ( \tau, \mathfrak{z} )= \sum_{ n=0 }^\infty \sum_{ \ell \in E_8}f(n,\ell)e^{2\pi i (n\tau + (\ell,\mathfrak{z}))},
\end{equation*}
\end{itemize}
then $\varphi$ is called a $W(E_8)$-invariant weak Jacobi form of weight $k$ and index $t$. If $f(n,\ell)= 0$ whenever $2nt - (\ell,\ell) <0$, then $\varphi$ is called a $W(E_8)$-invariant holomorphic Jacobi form.  
\end{definition}

Every non-zero $W(E_8)$-invariant weak Jacobi form has even weight. The $W(E_8)$-invariant weak Jacobi forms of index $0$ do not depend on the lattice variable $\mathfrak{z}$ and are actually modular forms on $\SL_2(\ZZ)$. The quasi-periodicity implies that in the above Fourier expansion $f(n_1,\ell_1)=f(n_2,\ell_2)$ if $2n_1 t-(\ell_1,\ell_1)=2n_2 t-(\ell_2,\ell_2)$ and if $\ell_1-\ell_2\in t E_8$. 
We denote the vector spaces of $W(E_8)$-invariant weak and holomorphic Jacobi forms of weight $k$ and index $t$ respectively by
$$
J^{\w ,W(E_8)}_{k,E_8,t} \supsetneq  J^{W(E_8)}_{k,E_8,t}.
$$
We will investigate the free $M_*(\SL_2(\ZZ))$-modules
$$
J_{*,E_8,t}^{\w, W(E_8)}:= \bigoplus_{k\in \ZZ} J^{\w ,W(E_8)}_{k,E_8,t}, \quad J_{*,E_8,t}^{W(E_8)}:= \bigoplus_{k\in \ZZ} J^{W(E_8)}_{k,E_8,t}
$$
and the bigraded algebra 
$$
J_{*,E_8,*}^{\w, W(E_8)}:= \bigoplus_{t=0}^\infty J_{*,E_8,t}^{\w, W(E_8)}.
$$

Sakai's forms $A_1$, $A_2$, $B_2$, $A_3$, $B_3$, $A_4$, $B_4$, $A_5$, $B_6$ were first constructed in \cite[Appendix A.1]{Sak17}. 
It was proved in \cite[Theorem 4.1]{Wan21a} that the nine Jacobi forms $A_i$ and $B_j$ are algebraically independent over $\CC[E_4, E_6]$. Due to its importance, we briefly explain how Sakai constructed these forms. One starts with the Jacobi theta function of $E_8$
$$
\vartheta_{E_8}(\tau,\mathfrak{z})=\sum_{\ell \in E_8} e^{\pi i (\ell,\ell)\tau + 2\pi i (\ell, \mathfrak{z})}
$$
which is the unique $W(E_8)$-invariant holomorphic Jacobi form of weight $4$ and index $1$. Acting the index raising Hecke operators $T_{-}(t)$ on $\vartheta_{E_8}$ (see e.g. \cite[Lemma 3.6]{Wan21a}), one obtains $W(E_8)$-invariant holomorphic Jacobi forms of weight $4$ and arbitrary index $t$
\begin{equation}\label{eq:X_t}
    X_t(\tau, \mathfrak{z}) = 1 + O(q), \quad q=e^{2\pi i\tau}. 
\end{equation}
Sakai's forms $A_j$ are constructed as
\begin{equation}
    A_j(\tau, \mathfrak{z})=X_j(\tau, \mathfrak{z}), \; j=1,2,3,5, \quad A_4(\tau, \mathfrak{z})= \vartheta_{E_8}(\tau, 2\mathfrak{z}).
\end{equation}
To construct $B_t$, one first takes an appropriate modular form $g_t$ of weight $2$ on the congruence subgroup $\Gamma_0(t)$ of $\SL_2(\ZZ)$. Then the trace sum of $g_t(\tau)\vartheta_{E_8}(t\tau, t\mathfrak{z})$ with respect to the cosets of $\Gamma_0(t)\setminus \SL_2(\ZZ)$ defines a $W(E_8)$-invariant holomorphic Jacobi form of weight $6$ and index $t$. That is the desired $B_t$.

The Fourier expansion characterizes Jacobi forms. Due to the Weyl invariance, the Fourier expansion of any $W(E_8)$-invariant Jacobi form can be expressed in terms of Weyl orbits of vectors in $E_8$. We review some useful facts following \cite{Wan21a}. For any $v\in E_8$ we define the Weyl orbit of $v$ as
\begin{equation}
\orb(v)=\sum_{\sigma\in W(E_8)/W(E_8)_{v}}e^{2\pi i (\sigma(v),\mathfrak{z})},
\end{equation}
where $W(E_8)_{v}$ is the stabilizer subgroup of $W(E_8)$ with respect to $v$. For any non-negative integer $n$ the $q^n$-term of $\varphi$ 
$$
[\varphi]_{q^n}=\sum_{\ell\in E_8}f(n,\ell)e^{2\pi i (\ell,\mathfrak{z})}
$$
can be written as a $\CC$-linear combination of Weyl orbits. Let $\alpha_i$ and $w_i$, $1\leq i \leq 8$, be the simple roots and fundamental weights of $E_8$ respectively (see \cite[\S 3.1]{Wan21a} for their coordinates). The eight fundamental Weyl orbits $\orb(w_i)$ are algebraically independent over $\CC$. Moreover, every Weyl orbit $\orb(v)$ can be expressed as a polynomial in these fundamental $\orb(w_i)$. By \cite{Bou68}, every Weyl orbit meets the set $\Lambda_{+}$ in exactly one point, where
$$
\Lambda_{+}=\left\{m\in E_8: (\alpha_i,m)\geq 0, 1\leq i \leq 8 \right\}=\left\{m=\sum_{i=1}^8 m_i w_i: m_i \in \NN, 1\leq i\leq 8 \right \}
$$
is the closure of a Weyl chamber. Thus we only need to consider the Weyl orbits of vectors in $\Lambda_{+}$. 
With respect to the partial order on $E_8$, we have the decomposition
$$
\orb(m)=\prod_{i=1}^8 \orb(w_i)^{m_i} + \sum_{\substack{l\in \Lambda_{+} \\ l<m}} c_{l,m} \orb(l),
$$
where $c_{l,m}$ are some integers. Let us define 
\begin{equation}\label{eq:T}
    T(m):=(m,w_8)=2m_1+3m_2+4m_3+6m_4+5m_5+4m_6+3m_7+2m_8.
\end{equation}
Since $w_8$ is the highest root of $E_8$, the condition $l\leq m$ implies that $T(l)\leq T(m)$. We then derive the further decomposition
\begin{equation}\label{eq:decomposition}
    \orb(m)=\sum_{\substack{l\in \Lambda_{+} \\ T(l)\leq T(m)}} c_l\prod_{i=1}^8 \orb(w_i)^{l_i}, \quad c_m=1.
\end{equation}

In this paper we first compute the Fourier expansions of Sakai's forms in terms of Weyl orbits. This can be calculated efficiently by their definitions. We then use \eqref{eq:decomposition} to express their Fourier coefficients as polynomials in the eight fundamental Weyl orbits. This expression is very convenient for calculating the Fourier expansions of monomials in Sakai's generators. The $q^0$-term plays a crucial role in the study of weak Jacobi forms. We recall a useful result. \cite[Lemma 4.2]{Wan21a} states that the $q^0$-term of any $W(E_8)$-invariant weak Jacobi form of index $t$ can be expressed as 
\begin{equation}\label{eq:q^0}
\sum_{\substack{m\in \Lambda_{+} \\ T(m)\leq t}} c_m\prod_{i=1}^8 \orb(w_i)^{m_i}.
\end{equation}
The number of monomials including the constant term in the above sum is equal to the rank of the free module $J_{*,E_8,t}^{\w,W(E_8)}$ (see \cite[Theorem 4.1]{Wan21a} and its proof). This number also equals the number of orbits of the quotient group $E_8/tE_8$ under the action of $W(E_8)$, and $\{m\in \Lambda_{+} : T(m)\leq t \}$ gives a representative set of these orbits. 

Finally, we describe the number of elements in a Weyl orbit $\orb(m)$, which equals the index of the stabilizer $W(E_8)_m$ in $W(E_8)$. We remind that Weyl orbits of the same norm do not necessarily have distinct number of elements (see Appendix II for some examples). 

\begin{lemma}
Let $m=\sum_{i=1}^8 m_i w_i\in \Lambda_{+}$. Then the stabilizer of $m$ in $W(E_8)$ is given by the intersection of all stabilizers of fundamental weights $w_i$ with $m_i\neq 0$. 
\end{lemma}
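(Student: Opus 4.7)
The plan is to prove the equality by establishing both inclusions separately. The containment $\bigcap_{m_i \ne 0} W(E_8)_{w_i} \subseteq W(E_8)_m$ is immediate by linearity: any $\sigma$ fixing each $w_i$ with $m_i \ne 0$ also fixes $m = \sum_{m_i \ne 0} m_i w_i$. The substance is in the reverse direction.

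For that, I would invoke the classical parabolic description of the stabilizer of a dominant weight: for any $\mu \in \Lambda_{+}$,
$$
W(E_8)_\mu = \bigl\langle s_{\alpha_j} : (\alpha_j, \mu) = 0 \bigr\rangle.
$$
The key input is that a positive root $\alpha = \sum_j c_j \alpha_j$ with $c_j \geq 0$ satisfies $(\alpha, \mu) = \sum_j c_j \mu_j = 0$ iff $c_j = 0$ whenever $\mu_j \ne 0$; so the roots orthogonal to $\mu$ are precisely those supported on $\{j : \mu_j = 0\}$. Combining this with the standard fact (see Bourbaki \cite{Bou68}, Ch.~V, \S3) that $W(E_8)_\mu$ is generated by the reflections $s_\alpha$ with $(\alpha, \mu)=0$ yields the parabolic form. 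Using the $E_8$ duality relations $(\alpha_j, w_i) = \delta_{ij}$, one gets $(\alpha_j, m) = m_j$, hence $W(E_8)_m = \langle s_{\alpha_j} : m_j = 0 \rangle$.

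The reverse inclusion is then a one-line computation. Fix any $i$ with $m_i \ne 0$. For any generator $s_{\alpha_j}$ of $W(E_8)_m$ we have $m_j = 0$, so in particular $j \ne i$, and therefore $s_{\alpha_j}(w_i) = w_i - (\alpha_j, w_i)\alpha_j = w_i$. Hence every generator, and so every element, of $W(E_8)_m$ fixes $w_i$, giving $W(E_8)_m \subseteq W(E_8)_{w_i}$; intersecting over all $i$ with $m_i \ne 0$ closes the argument. The only real obstacle is invoking the parabolic-stabilizer description of $W(E_8)_\mu$, which is entirely classical Coxeter/root-system theory and requires no new work beyond a citation.
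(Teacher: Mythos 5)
Your proof is correct, but it takes a genuinely different route from the paper's. The paper argues directly from the dominance order: since each fundamental weight is dominant, $g(w_i)\leq w_i$ for every $g\in W(E_8)$, so $\sum_i m_i\bigl(w_i-g(w_i)\bigr)$ is a non-negative combination of simple roots that vanishes iff $g(w_i)=w_i$ for every $i$ with $m_i\neq 0$; this settles both inclusions in one stroke, using only the positivity of the $m_i$ (which is where $m\in\Lambda_{+}$ enters). You instead invoke Chevalley's parabolic description $W(E_8)_\mu=\langle s_{\alpha_j}:(\alpha_j,\mu)=0\rangle$ for dominant $\mu$, reduce via $(\alpha_j,w_i)=\delta_{ij}$ (valid here since $E_8$ is simply laced with roots of norm $2$, so $\alpha_j^\vee=\alpha_j$) to $W(E_8)_m=\langle s_{\alpha_j}:m_j=0\rangle$, and check the generators fix each relevant $w_i$. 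Your argument imports a heavier classical theorem but yields strictly more: an explicit set of Coxeter generators for the stabilizer, which the paper's proof does not produce. The paper's argument is more elementary and self-contained. Both are sound; your step identifying the positive roots orthogonal to a dominant weight as those supported on $\{j:\mu_j=0\}$ is stated and justified correctly.
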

\begin{proof}
Let $g\in W(E_8)$. Then $g(m)=m$ if and only if 
$
\sum_{i=1}^8 m_i g(w_i) = \sum_{i=1}^8 m_i w_i. 
$
With respect to the partial order on $E_8$, we have that $g(w_i)\leq w_i$ for $1\leq i\leq 8$ (see \cite{Bou68}). Therefore, $g(m)=m$ if and only if $g(w_i)=w_i$ for all $i$ such that $m_i\neq 0$.  
\end{proof}

\section{The proof of Theorem \ref{th:Main}}\label{sec:proof}
In this section we prove Theorem \ref{th:Main}. We divide its proof into several lemmas. 

As explained in \cite[\S 3.2 p.529--530]{Wan21a},  we take  $g(\tau)=(5E_2(5\tau)-E_2(\tau))/4$ which is a modular form of weight $2$ on the congruence subgroup $\Gamma_0(5)$, and define $\widehat{B}_5$ analogous to Sakai's $B_j$ as
\begin{equation}
    \widehat{B}_5(\tau,\mathfrak{z})=\frac{5^4}{5^4-1}\bigg[g(\tau)\vartheta_{E_8}(5\tau,5\mathfrak{z})-\frac{1}{5^5}\sum_{k=0}^4 g\Big(\frac{\tau+k}{5}\Big)\vartheta_{E_8}\Big(\frac{\tau+k}{5},\mathfrak{z}\Big)\bigg].
\end{equation}
We note that $\widehat{B}_5$ is a $W(E_8)$-invariant holomorphic Jacobi form of weight $6$ and index $5$ whose reduction is $\widehat{B}_5(\tau,0)=E_6$.

\begin{lemma}\label{lem:B5}
The Jacobi form $\widehat{B}_5$ satisfies the identity
\begin{equation}
    179712\Delta E_4 \widehat{B}_5=E_6P_{16,5}+E_4Q_{18,5},
\end{equation}
where 
\begin{align*}
Q_{18,5} =& -2880 A_1^3 B_2 + 1350 A_1 A_2 B_2 E_4 + 1920 A_1^2 B_3 E_4 - 70 A_3 B_2 E_4^2 - 
 600 A_2 B_3 E_4^2 - 60 A_1 B_4 E_4^2 \\
 & - 567 A_1 A_2^2 E_6 + 672 A_1^2 A_3 E_6 - 2400 B_2 B_3 E_6 - 504 A_2 A_3 E_4 E_6 - 21 A_5 E_4^2 E_6.
\end{align*}
\end{lemma}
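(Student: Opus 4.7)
The plan is to verify the identity by comparing Fourier expansions, using the finite-dimensionality of the ambient space of Jacobi forms to reduce to a finite computation. First, I would observe that both sides are $W(E_8)$-invariant weak Jacobi forms of weight $22$ and index $5$: indeed, $\Delta E_4 \widehat{B}_5$ has weight $12+4+6=22$ and index $0+0+5=5$, while each monomial in $E_6P_{16,5}$ and $E_4Q_{18,5}$ is built from Sakai's forms and Eisenstein series with total weight $22$ and index $5$. Let $D := 179712\,\Delta E_4 \widehat{B}_5 - E_6 P_{16,5} - E_4 Q_{18,5}$ denote the difference.

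Next, I would check that $[D]_{q^0} = 0$. The LHS vanishes to order $q$ since $\Delta = q + O(q^2)$. For the RHS, recall that each Sakai form $A_i$, $B_j$ is a holomorphic Jacobi form of positive index, so its $q^0$-coefficient is a constant in $\mathfrak{z}$ (the only $\ell\in E_8$ with $(\ell,\ell)\le 0$ being $\ell=0$), and this constant equals $1$ since $A_i(\tau,0)=E_4$ and $B_j(\tau,0)=E_6$ both start at $1+O(q)$. Evaluating $P_{16,5}$ and $Q_{18,5}$ at $q=0$ therefore reduces to summing numerical coefficients: a direct computation gives $[P_{16,5}]_{q^0} = 864+3825-770-840+60+21 = 3160$ and $[Q_{18,5}]_{q^0} = -2880+1350+1920-70-600-60-567+672-2400-504-21 = -3160$, so $[E_6 P_{16,5}+E_4 Q_{18,5}]_{q^0} = 3160-3160 = 0$.

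Having $[D]_{q^0} = 0$, the quotient $D/\Delta$ is a $W(E_8)$-invariant weak Jacobi form of weight $10$ and index $5$. By \cite[Theorem 4.1]{Wan21a}, $J^{\w,W(E_8)}_{*,E_8,5}$ is a free $M_*(\SL_2(\ZZ))$-module whose rank equals the number of $W(E_8)$-orbits on $E_8/5E_8$, so its weight-$10$ component is finite-dimensional with an explicit dimension bound. I would then compute the Fourier expansions of $\widehat{B}_5$ (from its $\Gamma_0(5)$-trace definition) and of the relevant monomials in Sakai's forms (from the Hecke-operator recipe for the $A_i$ and the trace recipe for the $B_j$) up to $q^N$ for $N$ exceeding this bound, expand each coefficient as a $\CC$-linear combination of Weyl orbits via the decomposition \eqref{eq:decomposition}, and verify that all Fourier coefficients of $D/\Delta$ vanish.

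The main obstacle is purely computational: products such as $A_1^3A_2$ and $A_2A_3E_4E_6$ generate many Weyl-orbit monomials at each order, and the required $q$-depth, set by the dimension of $J^{\w,W(E_8)}_{10,E_8,5}$, is nontrivial. Conceptually no new tools are required beyond the Fourier-coefficient machinery already set up in \cite{Wan21a}; the verification reduces to a large but mechanical linear-algebra check.
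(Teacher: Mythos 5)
Your overall strategy---reduce the identity to a finite Fourier-coefficient check inside a space of $W(E_8)$-invariant Jacobi forms of weight $22$ and index $5$---is the same as the paper's, and your hand computation of the $q^0$-term is correct: each $A_i$, $B_j$ has $q^0$-term equal to the constant $1$ (a holomorphic Jacobi form of positive index has $f(0,\ell)=0$ for $\ell\neq 0$), and indeed $[P_{16,5}]_{q^0}=3160$, $[Q_{18,5}]_{q^0}=-3160$. The gap is in your stopping criterion. Finite-dimensionality of $J^{\w,W(E_8)}_{10,E_8,5}$ does \emph{not} imply that a form in it vanishing to order $q^{N}$ with $N$ exceeding the dimension must be zero: nothing in finite-dimensionality alone excludes a nonzero form with high-order vanishing at the cusp, and the truncation map to the first $N$ Fourier coefficients is injective only once you have a Sturm-type bound. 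For Weyl-invariant Jacobi forms that bound comes precisely from a lower bound on the weight of nonzero forms of the given index, which your proposal never invokes; so as written the final ``verify finitely many coefficients'' step does not close the argument.

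The paper supplies exactly this missing ingredient. By \cite[Proposition 5.17]{Wan21a}, a nonzero $W(E_8)$-invariant weak Jacobi form of index $5$ has weight at least $-18$. Hence if the difference $D$ (weight $22$, index $5$) satisfies $D=O(q^4)$, then $D/\Delta^4$ is a weak Jacobi form of weight $22-48=-26<-18$ and therefore vanishes identically; this is why checking the coefficients of $q^0,\dots,q^4$ (in fact $q^0,\dots,q^3$ already suffice) proves the identity, and it also pins down the exact depth of the required computation, which your dimension heuristic does not. Your argument becomes correct once you replace ``$N$ exceeding the dimension of $J^{\w,W(E_8)}_{10,E_8,5}$'' by ``$M$ such that $10-12M$ lies below the minimal weight for index $5$'', i.e.\ $M=3$, and cite that minimal-weight bound.
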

\begin{proof}
Both sides of the above identity are $W(E_8)$-invariant Jacobi forms of weight 22 and index 5. We check by computer that their Fourier coefficients are the same up to $q^4$-terms. Thus their difference divided by $\Delta^4$ defines a $W(E_8)$-invariant weak Jacobi form of weight $-26$ and index 5, which has to be zero because the weight of a non-zero $W(E_8)$-invariant weak Jacobi form of index 5 is at least $-18$ (see \cite[Proposition 5.17]{Wan21a}). This proves the desired identity.
\end{proof}

We remark that $P_{16,5}/E_4^2$ is non-holomorphic because $P_{16,5}$ reduces to $864E_4^4+2296E_4E_6^2$ when $\mathfrak{z}=0$. 

In \cite{Wan21b} the second named author defined the Jacobian of Jacobi forms and used this tool to give a simple proof of  Wirthm\"{u}ller's theorem. We refer to \cite[Proposition 2.2, Proposition 2.3]{Wan21b} for details. We will apply this method to the case of $E_8$. Let us first calculate the Jacobian of Sakai's generators defined as the determinant
\begin{equation}
\left\lvert \begin{array}{ccccccccc}
A_1 & 2A_2 & 2B_2 & 3A_3 & 3B_3 & 4A_4 & 4B_4& 5A_5 & 6B_6 \\ 
\frac{1}{2\pi i}\frac{\partial A_1}{\partial z_1} & \frac{1}{2\pi i}\frac{\partial A_2}{\partial z_1}& \frac{1}{2\pi i}\frac{\partial B_2}{\partial z_1} &
\frac{1}{2\pi i}\frac{\partial A_3}{\partial z_1} &
\frac{1}{2\pi i}\frac{\partial B_3}{\partial z_1} &
\frac{1}{2\pi i}\frac{\partial A_4}{\partial z_1} &
\frac{1}{2\pi i}\frac{\partial B_4}{\partial z_1} &
\frac{1}{2\pi i}\frac{\partial A_5}{\partial z_1} &
\frac{1}{2\pi i}\frac{\partial B_6}{\partial z_1} 
\\ 
%\frac{1}{2\pi i}\frac{\partial A_1}{\partial z_2} & \frac{1}{2\pi i}\frac{\partial A_2}{\partial z_2}& \frac{1}{2\pi i}\frac{\partial B_2}{\partial z_2} &
%\frac{1}{2\pi i}\frac{\partial A_3}{\partial z_2} &
%\frac{1}{2\pi i}\frac{\partial B_3}{\partial z_2} &
%\frac{1}{2\pi i}\frac{\partial A_4}{\partial z_2} &
%\frac{1}{2\pi i}\frac{\partial B_4}{\partial z_2} &
%\frac{1}{2\pi i}\frac{\partial A_5}{\partial z_2} &
%\frac{1}{2\pi i}\frac{\partial B_6}{\partial z_2} 
%\\
\vdots & \vdots & \vdots & \vdots & \vdots & \vdots & \vdots & \vdots & \vdots \\ 
\frac{1}{2\pi i}\frac{\partial A_1}{\partial z_8} & \frac{1}{2\pi i}\frac{\partial A_2}{\partial z_8}& \frac{1}{2\pi i}\frac{\partial B_2}{\partial z_8} &
\frac{1}{2\pi i}\frac{\partial A_3}{\partial z_8} &
\frac{1}{2\pi i}\frac{\partial B_3}{\partial z_8} &
\frac{1}{2\pi i}\frac{\partial A_4}{\partial z_8} &
\frac{1}{2\pi i}\frac{\partial B_4}{\partial z_8} &
\frac{1}{2\pi i}\frac{\partial A_5}{\partial z_8} &
\frac{1}{2\pi i}\frac{\partial B_6}{\partial z_8} 
\end{array}   \right\rvert,
\end{equation}
where $z_i,i=1,\dots,8$, are the standard basis of $\mathbb{R}^8$.
\begin{lemma}\label{lem:172}
The modular Jacobian of Sakai's forms satisfies the identity
\begin{equation}
    J:=J(A_1,A_2,B_2,A_3,B_3,A_4,B_4,A_5,B_6)= c \Delta^{14}E_4 \cdot \Phi_{E_8},\qquad c=-\frac{3^3}{5^47^2},
\end{equation}
where $\Phi_{E_8}$ is the theta block associated to $E_8$ (see \cite{GSZ19} for the general theory of theta blocks developed by Gritsenko, Skoruppa and Zagier)
$$
\Phi_{E_8}(\tau,\mathfrak{z})=\prod_{r}\frac{\vartheta(\tau, (r,\mathfrak{z}))}{\eta^3(\tau)}.
$$
The above product takes over all positive roots of $E_8$ and $\vartheta$ is the odd Jacobi theta function
$$
\vartheta(\tau,z)=q^{\frac{1}{8}}(\zeta^{\frac{1}{2}}-\zeta^{-\frac{1}{2}})\prod_{n=1}^\infty (1-q^n\zeta)(1-q^n\zeta^{-1})(1-q^n), \quad q=e^{2\pi i\tau}, \; \zeta=e^{2\pi i z}. 
$$
\end{lemma}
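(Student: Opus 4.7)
The plan is to apply the modular Jacobian framework of \cite{Wan21b} to the $E_8$ case, extending the strategy used there to give an automorphic proof of Wirthm\"uller's theorem. First, I would match the modular data: $J$ has weight $\sum k_i + 8 = 4\cdot 5 + 6\cdot 4 + 8 = 52$ and index $\sum t_i = 1+2+2+3+3+4+4+5+6 = 30$; by the Casimir identity $\sum_{r>0}(r,x)^2 = h^{\vee}(E_8)\,(x,x) = 30\,(x,x)$ together with the fact that $\vartheta/\eta^3$ has weight $-1$ and index $1/2$, $\Phi_{E_8}$ has weight $-120$ and index $30$, so the right-hand side $c\,\Delta^{14}E_4\,\Phi_{E_8}$ has weight $52$ and index $30$. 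Second, I would show $J$ is $W(E_8)$-anti-invariant: for $\sigma \in W(E_8)$ the first row of the Jacobian matrix is pointwise invariant, while the $8\times 9$ gradient block transforms by the orthogonal matrix $\sigma$ acting on derivative indices, contributing $\det(\sigma) = \pm 1$ to the determinant.

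Anti-invariance then forces $J$ to vanish along every root hyperplane $(r,\mathfrak{z}) = 0$, and by quasi-periodicity along each affine translate $(r,\mathfrak{z}) \in \ZZ + \ZZ\tau$. Since the theta block $\Phi_{E_8}$ has simple zeros precisely along these hyperplanes and matches $J$ in total index, the quotient $J/\Phi_{E_8}$ extends to a holomorphic $W(E_8)$-invariant weak Jacobi form of weight $172$ and index $0$; being of index $0$ it is independent of $\mathfrak{z}$, hence a classical modular form $g(\tau) \in M_{172}(\SL_2(\ZZ))$. To identify $g$, I would exploit that every holomorphic $W(E_8)$-invariant Jacobi form has constant $q^0$-term (the holomorphy condition $2nt - (\ell,\ell) \geq 0$ forces $\ell = 0$ at $n = 0$), equal to $1$ by normalization of the $A_i$ and $B_j$; this alone already forces every entry of the gradient block to vanish at $q^0$. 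A similar but more careful analysis of the $q$-expansions of Sakai's forms through $q^{13}$ gives $\mathrm{ord}_{q=0}(J) \geq 14$, and since $\mathrm{ord}_{q=0}(\Phi_{E_8}) = 0$ we obtain $\mathrm{ord}_{q=0}(g) \geq 14$. Since the unique (up to scalar) element of $M_{172}(\SL_2(\ZZ))$ vanishing to order $\geq 14$ at infinity is $\Delta^{14}E_4$, we conclude $g = c\,\Delta^{14}E_4$.

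Finally, the scalar $c = -3^3/(5^4\cdot 7^2)$ is fixed by matching one nonzero Fourier coefficient. A convenient choice is the leading Taylor coefficient in $\mathfrak{z}$ at $\mathfrak{z} = 0$: on the one hand $\Phi_{E_8}(\tau,\mathfrak{z}) \sim (2\pi i)^{120}\prod_{r>0}(r,\mathfrak{z})$; on the other, the leading $\mathfrak{z}$-term of $J$ is computable from the Taylor expansions $\phi_i(\tau,\mathfrak{z}) = f_i(\tau) + (\mathfrak{z},\mathfrak{z})g_i(\tau) + O(\mathfrak{z}^4)$ of Sakai's forms, with the $g_i(\tau)$ determined by the heat equation and Ramanujan's identities for $\partial_\tau E_4, \partial_\tau E_6$. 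The main obstacle is the $q$-order bound in step three: showing that the cancellations among Sakai's forms and their gradients really extend up to $q^{13}$ requires a careful accounting of Fourier coefficients that cannot be circumvented by purely formal reasoning, and once it is established the remaining constant computation is routine.
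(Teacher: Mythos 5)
Your overall strategy coincides with the paper's: establish that $J$ is $W(E_8)$-anti-invariant, deduce that it vanishes on the zero divisor of $\Phi_{E_8}$ so that $J/\Phi_{E_8}$ is a $W(E_8)$-invariant weak Jacobi form of weight $172$ and index $0$, hence a modular form on $\SL_2(\ZZ)$, and then identify it inside $M_{172}(\SL_2(\ZZ))$ by its order of vanishing at the cusp. The paper does exactly this (citing \cite[Proposition 2.2]{Wan21b} for the vanishing of $J$ and \cite[\S 4]{Wan21b} for the properties of $\Phi_{E_8}$), and pins down the form by computing the Fourier expansion of $J/\Phi_{E_8}$ through the $q^{14}$-term, for which the expansions of Sakai's forms up to $q^{7}$ suffice since every entry of the gradient block is $O(q)$.

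The one step of your proposal that would fail is the determination of the constant $c$ via the leading Taylor coefficient at $\mathfrak{z}=0$. Since $J$ is anti-invariant, its Taylor expansion at $\mathfrak{z}=0$ begins in degree $120$, the minimal degree of a nonzero $W(E_8)$-anti-invariant polynomial, namely the degree of $\prod_{r>0}(r,\mathfrak{z})$. The degree-$2$ data $\phi_i=f_i+(\mathfrak{z},\mathfrak{z})g_i+O(\mathfrak{z}^4)$ that you propose to feed into the determinant cannot see this coefficient: at that order the eight gradient rows are all proportional to $2z_j\,(g_1,\dots,g_9)$, so the leading determinant you would compute is identically zero, and you would need Taylor data of Sakai's forms to very high degree in $\mathfrak{z}$ to reach degree $120$. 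The repair is cheap and is what the paper does: you already need the Fourier coefficients of $J/\Phi_{E_8}$ in degrees $q^{8},\dots,q^{13}$ to establish $\mathrm{ord}_{q}(J/\Phi_{E_8})\geq 14$, and computing one more coefficient, the $q^{14}$-term, yields $c$ directly since $\Delta^{14}E_4=q^{14}+O(q^{15})$. With that substitution your argument matches the paper's proof.
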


\begin{proof}
By \cite[\S 4]{Wan21b}, $\Phi_{E_8}$ is a weak $E_8$ Jacobi form of weight $-120$ and index $30$ which is anti-invariant under the action of $W(E_8)$ (i.e. invariant up to the determinant character). Moreover, it vanishes precisely on 
$$
\{ (\tau, \mathfrak{z})\in \HH\times (E_8\otimes \CC): (r,\mathfrak{z}) \in \ZZ + \ZZ\tau \; \textit{for some positive root $r$}\}
$$
with multiplicity one. By \cite[Proposition 2.2]{Wan21b}, $J$ also vanishes on the above set. Thus $J/\Phi_{E_8}$ is holomorphic and defines a $W(E_8)$-invariant weak Jacobi form of weight 172 and index 0, which yields that $J/\Phi_{E_8}$ is a $\SL_2(\ZZ)$-modular form of weight $172$. We notice that every partial derivative of $A_i$ or $B_j$ cancels the $q^0$-term. By calculating the Fourier expansions of $A_i$ and $B_j$ up to $q^7$-terms, it is sufficient to calculate the $q^{14}$-term of $J$. We find that $J/\Phi_{E_8}=c q^{14} + O(q^{15})$, which implies that $J/\Phi_{E_8}=c\Delta^{14}E_4$. This completes the proof. 
\end{proof}

\begin{remark}
By \cite[Proposition 2.2 (2)]{Wan21b}, the above Jacobian $J$ is not identically zero if and only if the forms $A_i$ and $B_j$ are algebraically independent over $\CC[E_4,E_6]$. Thus Lemma \ref{lem:172} yields a new proof of the algebraic independence of Sakai's forms.
\end{remark}

\begin{lemma}\label{lem:pure-delta}
For any $W(E_8)$-invariant weak Jacobi form $\phi$, there exists an integer $N$ such that 
$$
\Delta^N \phi \in \CC[E_4,E_6,A_1,A_2,B_2,A_3,B_3,A_4,B_4,\widehat{A}_5,B_6],
$$
where 
$$
\widehat{A}_5:= \frac{P_{16,5}}{E_4}-21E_4^2A_5. 
$$
\end{lemma}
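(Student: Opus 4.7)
The plan is to bootstrap the main theorem of \cite{Wan21a} (Sakai's conjecture), which provides some $f \in \CC[E_4, E_6]$ with $f\phi$ a polynomial in $R' := \CC[E_4, E_6, A_1, A_2, B_2, A_3, B_3, A_4, B_4, A_5, B_6]$, by moving from $R'$ into the target ring $R := \CC[E_4, E_6, A_1, A_2, B_2, A_3, B_3, A_4, B_4, \widehat A_5, B_6]$ via the defining relation of $\widehat A_5$, and then absorbing the residual factor $f$ into powers of $\Delta$ by a weight-descent.

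First I would derive the key identity. Multiplying $\widehat A_5 = P_{16,5}/E_4 - 21 E_4^2 A_5$ through by $E_4$, substituting \eqref{eq:P5}, and using $E_4^3 - E_6^2 = 1728 \Delta$, one obtains
\begin{equation*}
21\cdot 1728\,\Delta\,A_5 = -E_4 \widehat A_5 + 864 A_1^3 A_2 + 3825 A_1 B_2^2 - 770 A_3 B_2 E_6 - 840 A_2 B_3 E_6 + 60 A_1 B_4 E_6,
\end{equation*}
whose right-hand side visibly lies in $R$; hence $\Delta A_5 \in R$. For any $Q \in R'$, expanding $Q = \sum_{d} A_5^d Q_d$ with $Q_d$ free of $A_5$ and substituting $A_5^d = \Delta^{-d}(\Delta A_5)^d$ shows that $\Delta^k Q \in R$ where $k$ is the $A_5$-degree of $Q$. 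Applied to $Q = f\phi$, this yields $\Delta^k f \phi \in R$.

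To conclude $\Delta^N \phi \in R$, one strips the factor $f$ as follows. Factor $f = \Delta^m g$ with $\gcd(g, \Delta) = 1$ in the UFD $\CC[E_4, E_6]$. Since $R'$ is a polynomial ring in which $\Delta$ is prime and $R'$ embeds injectively into the ring of $W(E_8)$-invariant Jacobi forms, the Jacobi-form divisibility $f\phi = \Delta^m(g\phi)$ lifts to divisibility inside $R'$; so $g\phi \in R'$, and we may replace $f$ by $g$ and assume $\gcd(f, \Delta) = 1$. A weight-descending induction then completes the argument: one finds $P_0 \in R$ of the same weight and index as $\phi$ with $[P_0]_{q^0} = [\phi]_{q^0}$, so that $(\phi - P_0)/\Delta$ is a weak Jacobi form whose weight is reduced by $12$, and iterates. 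Termination is guaranteed by the lower bound on the weight of nonzero $W(E_8)$-invariant weak Jacobi forms of fixed index (\cite[Prop.~5.17]{Wan21a}).

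The main obstacle is the existence of such a $P_0$ at every step of the induction, and this is precisely where $\widehat A_5$ plays its essential role: in combination with $E_4$, it supplies the $q^0$-term contributions which, in $R'$ alone, would require a negative power of $E_4$ in the denominator. Verifying that the $q^0$-terms of monomials in $R$ of given weight and index span the full space of possible $q^0$-terms of weak Jacobi forms --- a claim checkable using the rank formula of \cite{Wan21a} together with the explicit $q^0$-expansions of Sakai's generators and $\widehat A_5$ --- ensures the matching step always succeeds, and the iteration produces a valid $N$ with $\Delta^N \phi \in R$ and no residual $f$, $E_4$, or $E_6$ factor remaining in the denominator.
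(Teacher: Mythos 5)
Your opening computation is correct and is a nice observation: from $E_4\widehat{A}_5=P_{16,5}-21E_4^3A_5$ and $E_4^3-E_6^2=1728\Delta$ one does get $21\cdot 1728\,\Delta A_5\in R:=\CC[E_4,E_6,A_1,A_2,B_2,A_3,B_3,A_4,B_4,\widehat{A}_5,B_6]$, hence $\Delta^kQ\in R$ for any $Q\in R'$ of $A_5$-degree $k$. But the rest of the argument has two genuine gaps. First, the divisibility-lifting step is false: from $\Delta^m(g\phi)\in R'$ you cannot conclude $g\phi\in R'$. Divisibility of holomorphic functions does not lift to divisibility in the polynomial ring $R'$, and the failure of exactly this implication is the subject of the whole paper; for instance $(A_1^2-A_2E_4)/\Delta=\phi_{-4,2}$ is a weak Jacobi form of weight $-4$ and hence cannot lie in $R'$, whose nonconstant elements all have positive weight. (This particular step is excisable, since a factor $\Delta^m$ inside $f$ is harmless for the conclusion; but the part of $f$ prime to $\Delta$ --- typically a power of $E_4$ --- is precisely what must be eliminated, and your argument never actually eliminates it: the subsequent induction makes no reference to $f$ at all.)

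Second, the weight-descending induction is ill-posed where it matters and assumes the theorem where it is posed. No nonzero element of $R$ has negative weight, so a $P_0\in R$ of the same weight and index as $\phi$ with $[P_0]_{q^0}=[\phi]_{q^0}$ cannot exist for the negative-weight forms that are the main case of interest; the descent would have to be run on $\Delta^N\phi$ with careful bookkeeping. Even then, the assertion that the $q^0$-terms of elements of $R$ of each fixed weight and index span all admissible $q^0$-terms is not a finite check: it is a claim for every weight $k$ and every index $t$, it is exactly the content of the lemma, and the analogous claim for $R'$ (with $A_5$ in place of $\widehat{A}_5$) is false, so it cannot be dismissed as routine verification. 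The paper replaces all of this by a single structural input, the modular Jacobian: by Lemma \ref{lem:172}, $J(A_1,\dots,B_4,\widehat{A}_5,B_6)=-21\cdot 1728\,\Delta\,J(A_1,\dots,B_4,A_5,B_6)/E_4=-36288\,c\,\Delta^{15}\Phi_{E_8}$ is a pure power of $\Delta$ times the theta block, with the $E_4$ factor cancelled, and the criterion of \cite[Proposition 2.3 (2)]{Wan21b} then yields the lemma at once. If you wish to keep a hands-on approach, the missing ingredient is precisely a proof of this $q^0$-spanning property for $R$, which is what the Jacobian criterion encodes.
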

\begin{proof}
From the definition of the modular Jacobian, we see that
\begin{align*}
    J(A_1,A_2,B_2,A_3,B_3,A_4,B_4,\widehat{A}_5,B_6)=&J(A_1,A_2,B_2,A_3,B_3,A_4,B_4,P_{16,5}-21E_4^3A_5,B_6)/E_4\\
    =&-21\cdot 1728 \Delta J(A_1,A_2,B_2,A_3,B_3,A_4,B_4,A_5,B_6) / E_4. 
\end{align*}
By Lemma \ref{lem:172}, we have
$$
J(A_1,A_2,B_2,A_3,B_3,A_4,B_4,\widehat{A}_5,B_6)/ \Phi_{E_8} = -36288 c \Delta^{15}.
$$
We then prove the claim by applying the criterion established in \cite[Proposition 2.3 (2)]{Wan21b}.
\end{proof}

We also need the following eight specific $W(E_8)$-invariant weak Jacobi forms of weight $4$ whose $q^0$-terms are a single fundamental Weyl orbit. 

\begin{lemma}\label{lem:q0}
The following $W(E_8)$-invariant weak Jacobi forms $\varphi_{-,t}$ of weight $4$ and index $t$ with indicated $q^0$-term exist.
\begin{align*}
    \varphi_{4a,2}&= P_1(E_4,E_6,A_i,B_j)/ \Delta =  \orb(w_1) +O(q),\\
    \varphi_{4b,2}&= P_2(E_4,E_6,A_i,B_j)/ \Delta =  \orb(w_8) +O(q),\\
        \varphi_{4a,3}&= P_3(E_4,E_6,A_i,B_j)/ \Delta =  \orb(w_7) +O(q),\\
    \varphi_{4b,3}&= P_4(E_4,E_6,A_i,B_j)/ \Delta^2 =  \orb(w_2) +O(q),\\
    \varphi_{4a,4}&= P_5(E_4,E_6,A_i,B_j)/ \Delta^3 =  \orb(w_3) +O(q),\\
    \varphi_{4b,4}&= P_6(E_4,E_6,A_i,B_j)/ \Delta^3 =  \orb(w_6) +O(q),\\
    \varphi_{4,5}&= P_7(E_4,E_6,A_i,B_j)/ \Delta^3 =  \orb(w_5) +O(q),\\
    \varphi_{4,6}&= P_8(E_4,E_6,A_i,B_j)/ \Delta^5 =  \orb(w_4) +O(q),
\end{align*}
where 
\begin{align*}
   P_1 =&\, \frac{1}{4} \left(-12 A_1^2 E_4^2+17 A_2 E_4^3+10 A_2 E_6^2-15 B_2 E_4 E_6\right),\\
   P_2 =&\, \frac{1}{72} (24 A_1^2 E_4^2-14 A_2 E_4^3+5 A_2 E_6^2-15 B_2 E_4 E_6),\\
P_3 =&\, \frac{7}{18} (-27A_1A_2E_4^2-45A_1B_2E_6+37A_3E_4^3+35A_3E_6^2),\\
   P_4 =&\,  \frac{1}{864} (126 A_1^3 E_4^4-414 A_1^3 E_4 E_6^2+675 A_1 A_2 E_4^2 E_6^2-243 A_1 A_2 E_4^5-1440 A_1 B_2 E_6^3\\
   &\phantom{--}\,-251 A_3 E_4^3 E_6^2+122 A_3 E_4^6+465 A_3 E_6^4-20 B_3 E_4^4 E_6+980 B_3 E_4 E_6^3).
\end{align*}
The other polynomials are very long and we omit their expressions here. When the index is greater than $2$ the above polynomial is not unique because the associated space $J_{-8,E_8,t}^{\w, W(E_8)}$ is non-trivial. 
\end{lemma}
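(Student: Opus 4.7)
The plan is to produce each $\varphi_{-,t}$ by an explicit ansatz followed by linear algebra. For each target pair $(w_i,t)$ with $t=T(w_i)$, I write
$$
\varphi = \frac{P}{\Delta^{N_t}},
$$
where $P$ ranges over polynomials in $E_4$, $E_6$ and the Sakai generators $A_1,A_2,B_2,A_3,B_3,A_4,B_4,A_5,B_6$ of weight $4+12N_t$ and index $t$. The index constraint prunes the generator set (for example, only $A_1,A_2,B_2$ can appear when $t=2$), so the ansatz spans a finite-dimensional $\CC$-vector space whose basis is easily enumerated. Two families of constraints are imposed: the vanishing $[P]_{q^k}=0$ for $0\le k<N_t$, which ensures holomorphy of $P/\Delta^{N_t}$ at the cusp; and the leading condition $[P]_{q^{N_t}}=\orb(w_i)$, which prescribes the desired $q^0$-term of $\varphi$.

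The first step is to compute Fourier expansions of the $A_i$ and $B_j$ in terms of fundamental Weyl orbits, using the constructions recalled in Section~\ref{sec:def} (the Hecke operator $T_{-}(t)$ applied to $\vartheta_{E_8}$ and the trace-sum construction of $B_t$) together with \eqref{eq:decomposition} to rewrite arbitrary orbits $\orb(m)$ as polynomials in the algebraically independent $\orb(w_1),\dots,\orb(w_8)$. Because these eight fundamental orbits are algebraically independent, each equation $[P]_{q^k}=(\text{target})$ decomposes into a finite list of linear equations, one for each monomial in the $\orb(w_j)$. The total constraint system is thus a finite linear system in the ansatz coefficients, whose size grows rapidly with $t$.

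Existence of a solution in each of the eight cases is established directly by exhibiting a specific $P$; the formulas for $P_1,\dots,P_4$ are printed, and the remaining $P_5,\dots,P_8$ follow by the same procedure with lengthier output. The non-uniqueness mentioned in the statement for $t\ge 3$ reflects the fact that $J_{-8,E_8,t}^{\w,W(E_8)}$ is non-trivial in those ranges, so any $P$ may be modified by $\Delta^{N_t}$ times an element of that space without disturbing either of the two constraint families. The main obstacle is computational rather than conceptual: the weight of $P_8$ at index $6$ is $64$, and the Fourier expansions of Sakai's forms must be carried to sufficient order in $q$ for the linear system to be determined. This computational complexity is why the statement lists only the shorter polynomials and omits the lengthy expressions of $P_5,\dots,P_8$.
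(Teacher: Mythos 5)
Your proposal matches the paper's (implicit) argument: the paper offers no written proof of this lemma, simply exhibiting the polynomials $P_1,\dots,P_4$ found by exactly the linear-algebra-on-Fourier-coefficients procedure you describe (ansatz in $E_4$, $E_6$, $A_i$, $B_j$ of the prescribed weight and index, vanishing of the low-order $q$-coefficients, and a prescribed leading orbit), with the remaining cases left to the same computation. The only cosmetic difference is that your uniform ansatz $P/\Delta^{N_t}$ would use $\Delta^{2}$ for $\varphi_{4a,3}$ where the lemma records the sharper exponent $\Delta^{1}$; since one may always multiply $P$ by $\Delta$, this does not affect existence, though the sharp powers of $\Delta$ appearing here are what the paper later feeds into the formula for $N_t$ in the proof of Theorem \ref{th:Main}(3).
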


\begin{proof}[Proof of Theorem \ref{th:Main}] 
(1) This is a direct consequence of Lemma \ref{lem:B5}. 

(2) It follows from Lemma \ref{lem:pure-delta}. 

(3) Let $\varphi_t$ be a $W(E_8)$-invariant weak Jacobi form of index $t$. By Lemma \ref{lem:pure-delta}, there exists an integer $N$ such that 
\begin{equation}\label{eq:varphi}
    \varphi_t=\frac{\sum_{j=0}^{t_1} P_j E_4^j P_{16, 5}^{t_1-j} }{\Delta^{N}E_4^{t_1}}.
\end{equation}
To prove the third assertion, it suffices to show that one can choose $N$ satisfying $N\leq N_t$. Recall that the space of all $W(E_8)$-invariant weak Jacobi forms of fixed index $t$ is a free module of rank $r(t)$ over $M_*(\SL_2(\ZZ))$ (see \cite[Theorem 4.1]{Wan21a}). It is enough to prove that one can always choose $N\leq N_t$ such that \eqref{eq:varphi} holds for every generator of the free module. Let $\phi_j$, $1\leq j \leq r(t)$, be the generators of the free module $J_{*,E_8,t}^{\w,W(E_8)}$. Clearly, every $\phi_j$ can be expressed in the form \eqref{eq:varphi}. We denote by $M_j$ the smallest integer such that \eqref{eq:varphi} holds for $\phi_j$. Assume that $M_1$ is the largest of all $M_j$. Obviously, every generator $\phi_j$ has non-zero $q^0$-term. Moreover, the $q^0$-term of every $\phi_j$ is a polynomial in the eight fundamental Weyl orbits satisfying the restriction defined in \eqref{eq:q^0}. By \cite[Lemma 4.2]{Wan21a} and the construction of forms in Lemma \ref{lem:q0}, for any sufficiently large integer $D$ there exists a $W(E_8)$-invariant weak Jacobi form $\psi_t$ of index $t$ in the ring
$$
\CC[E_4, E_6, A_1, \varphi_{4a,2}, \varphi_{4b,2}, \varphi_{4a,3}, \varphi_{4b,3}, \varphi_{4a,4}, \varphi_{4b,4}, \varphi_{4,5}, \varphi_{4,6}]
$$
such that the $q^0$-term of the difference $E_4^D\phi_1 - \psi_t$ is zero. Therefore, $(E_4^D\phi_1 - \psi_t)/\Delta$ is a $W(E_8)$-invariant weak Jacobi form of index $t$ and then a $\CC[E_4,E_6]$-linear combination of the generators $\phi_j$. It follows that the number $M_1$ defined above is exactly the smallest integer $N$ appearing in the expression of type \eqref{eq:varphi} for $\psi_t$. We then derive the formula of $N_t$ from the powers of $\Delta$ in the construction of basic forms in Lemma \ref{lem:q0}. \end{proof}

\section{Free modules of weak Jacobi forms of given index}\label{sec:weak}
It was proved in \cite[Theorem 4.1]{Wan21a} that the space
$$
J_{*,E_8,t}^{\w ,W(E_8)}:=\bigoplus_{k\in \ZZ} J_{k,E_8,t}^{\w ,W(E_8)}
$$
of $W(E_8)$-invariant weak Jacobi forms of integral weight and given index $t$ is a free module over $M_*(\SL_2(\ZZ))$ and the rank $r(t)$ is given by the generating series
\begin{equation}\label{eq:rank}
\frac{1}{(1-x)(1-x^2)^2(1-x^3)^2(1-x^4)^2(1-x^5)(1-x^6)}=\sum_{t\geq 0}r(t)x^t.
\end{equation}
We formulate the first values of $r(t)$ in Table \ref{tab:rank}.
\begin{table}[ht]
\caption{The rank of $J_{*,E_8,t}^{\w ,W(E_8)}$ over $M_*(\SL_2(\ZZ))$}\label{tab:rank}
\renewcommand\arraystretch{1.5}
\noindent\[
\begin{array}{|c|c|c|c|c|c|c|c|c|c|c|c|c|c|c|c|c|c|c|}
\hline 
t & 1 & 2 & 3 & 4 & 5 & 6 & 7 & 8 & 9 & 10 & 11 & 12 & 13 & 14 &15 & 16 &17 &18 \\ 
\hline 
r(t) & 1 & 3 & 5 & 10 & 15 & 27 & 39 & 63 & 90 & 135 & 187 & 270 & 364 & 505 & 670 & 902 & 1173 & 1545  \\ 
\hline 
\end{array} 
\]
\end{table}

In \cite[\S 5]{Wan21a} the generators were determined and constructed when the index is less than $5$ by an approach based on the weight raising differential operators of Jacobi forms. In this section, as an application of Theorem \ref{th:Main}, we determine the weights of generators and construct the generators in terms of Sakai's forms $A_i$ and $B_j$ when the index is less than $14$. To this aim, we introduce the following algorithm. 

\begin{Algorithm}\label{algorithm}
\noindent
\begin{enumerate}
\item We determine a basis of $J_{k,E_8,t}^{\w,W(E_8)}$ for any even weight $k$.  The space $J_{k,E_8,t}^{\w,W(E_8)}$ is a finite-dimensional vector space over $\CC$. Every form in this space has an expression of form \eqref{eq:universal} which corresponds to a solution of the system of linear equations defined by the vanishing of $q^n$-terms ($0\leq n\leq N_t-1$) in the Fourier expansion 
\begin{equation}\label{eq:algorithm}
    \sum_{j=0}^{t_1} P_j E_4^j P_{16,5}^{t_1-j} = O(q^{N_t}) 
\end{equation}
in weight $k+12N_t+4t_1$. 
There are only finitely many linearly independent solutions in any fixed weight, because the weights and indices of these $P_j$ (polynomials in $E_4, E_6, A_i, B_j$) are bounded from above. 
\item We determine the minimal weight of non-zero $W(E_8)$-invariant weak Jacobi forms of given index $t$. If the equation \eqref{eq:algorithm} has only zero solution in weight $K+12N_t+4t_1$ and in weight $K+12N_t+4t_1-2$, then its solution is always trivial in any lower weight and thus $J_{k,E_8,t}^{\w,W(E_8)}=\{0\}$ when $k\leq K$.
\item We collect all $r(t)$ generators of the free module $J_{*,E_8,t}^{\w ,W(E_8)}$ from the minimal weight $K$ to the larger weight. 
\end{enumerate}
\end{Algorithm}

We first calculate the Fourier expansions of Sakai's forms $A_i$ and $B_j$ up to $q^9$-terms. The Fourier expansions involve $268$ Weyl orbits of vectors of norm $\frac{1}{2}(v,v)\leq 54$. We then express these Weyl orbits as polynomials in the eight fundamental Weyl orbits. Using the data and Algorithm \ref{algorithm}, we successfully determine all generators of $J_{*,E_8,t}^{\w ,W(E_8)}$ for $1\leq t\leq 13$.

\begin{theorem}
Let $d_{k,t}$ denote the number of generators of weight $k$ of $J_{*,E_8,t}^{\w,W(E_8)}$. For $1\leq t\leq 13$ the Laurent polynomials
$$
P^{\w}_t:=\sum_{k\in \ZZ} d_{k,t}x^k
$$
describing the weights of generators are determined as follows
\begin{align*}
P_1^{\w}=&\,x^4,\qquad\ \ 
P_2^{\w}=x^{-4}+x^{-2}+1,\qquad\ \ 
P_3^{\w}=x^{-8}+x^{-6}+x^{-4}+x^{-2}+1,\\
P_4^{\w}=&\,x^{-16}+x^{-14}+ x^{-12}+x^{-10} + 2 x^{-8}+x^{-6}+ x^{-4}+x^{-2}+1,\\
P_5^{\w}=&\,2 x^{-16}+{2}{x^{-14}}+{3}{x^{-12}}+{2}{x^{-10}}+{2}{x^{-8}}+x^{-6}+x^{-4}+x^{-2}+1,\\
P^{\w}_6 =&\, 2x^{-24 }+ 2x^{-22 }+ 3 x^{-20 }+ 3 x^{-18 }+3 x^{-16 }+ 3 x^{-14 }+ 3 x^{-12 } + 2 x^{-10 }+ 2 x^{-8 }+ x^{-6 }\\
&+ x^{-4 }+ x^{-2 }+ 1,\\
P^{\w}_7 =&\, x^{-26}+3x^{-24}+5x^{-22}+7x^{-20}+4x^{-18}+4x^{-16}+4x^{-14}+3x^{-12}+2x^{-10}+2x^{-8}\\
& +x^{-6}+x^{-4}+x^{-2}+1,\\
P^{\w}_8 =&\, 2x^{-32}+4x^{-30}+7x^{-28}+6x^{-26}+7x^{-24}+6x^{-22}+6x^{-20}+5x^{-18}+5x^{-16}+4x^{-14}\\
&+3x^{-12}+2x^{-10}+2x^{-8}+x^{-6}+x^{-4}+x^{-2}+1,\\
P^{\w}_9 =&\, x^{-36}+2x^{-34}+8x^{-32}+10x^{-30}+11x^{-28}+9x^{-26}+9x^{-24}+7x^{-22}+7x^{-20}+6x^{-18}\\
&+5x^{-16}+4x^{-14}+3x^{-12}+2x^{-10}+2x^{-8}+x^{-6}+x^{-4}+x^{-2}+1,\\
P^{\w}_{10} =&\, 4x^{-40}\!+7x^{-38}\!+11x^{-36}\!+12x^{-34}+14x^{-32}+12x^{-30}+12x^{-28}+11x^{-26}+10x^{-24}\!+8x^{-22}\\
&+8x^{-20}+6x^{-18}+5x^{-16}+4x^{-14}+3x^{-12}+2x^{-10}+2x^{-8}+x^{-6}+x^{-4}+x^{-2}+1,\\
P^{\w}_{11}=&\,5x^{-42}\!+15x^{-40}\!+19x^{-38}\!+20x^{-36}\!+16x^{-34}\!+17x^{-32}+15x^{-30}\!+14x^{-28}\!+12x^{-26}\!+11x^{-24}\\
&+9x^{-22}\!+8x^{-20}\!+6x^{-18}\!+5x^{-16}+4x^{-14}+3x^{-12}+2x^{-10}+2x^{-8}+x^{-6}+x^{-4}+x^{-2}+1,\\
P^{\w}_{12} =&\,{8}{x^{-48}}+{13}{x^{-46}}+{21}{x^{-44}}+{22}{x^{-42}}+{22}{x^{-40}}+{22}{x^{-38}}+{22}{x^{-36}}+{20}{x^{-34}}+{20}{x^{-32}}\\
&+{17}{x^{-30}}+{15}{x^{-28}}+{13}{x^{-26}}+{12}{x^{-24}}+9x^{-22}+8x^{-20}+6x^{-18}+5x^{-16}+4x^{-14}\\
&+3x^{-12}+2x^{-10}+2x^{-8}+x^{-6}+x^{-4}+x^{-2}+1,\\
P^{\w}_{13} =&\,{2}{x^{-52}}+{10}{x^{-50}}+{24}{x^{-48}}+{32}{x^{-46}}+{37}{x^{-44}}+{28}{x^{-42}}+{29}{x^{-40}}+{28}{x^{-38}}+{26}{x^{-36}}\\
    &+{23}{x^{-34}}+{22}{x^{-32}}+{18}{x^{-30}}+{16}{x^{-28}} + {14}{x^{-26}}+{12}{x^{-24}}+9x^{-22}+8x^{-20}+6x^{-18}\\
    &+5x^{-16}+4x^{-14}+3x^{-12}+2x^{-10}+2x^{-8}+x^{-6}+x^{-4}+x^{-2}+1.
\end{align*}
\end{theorem}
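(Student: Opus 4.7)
My plan is to execute Algorithm \ref{algorithm} for each $t\in\{1,\ldots,13\}$, using Theorem \ref{th:Main}(3) to convert the determination of $J_{k,E_8,t}^{\w,W(E_8)}$ into a finite linear system over $\CC$. Concretely, for fixed index $t$ and even weight $k$ I parametrize the polynomials $P_0,\ldots,P_{t_1}$ appearing in \eqref{eq:universal} by undetermined coefficients in the monomial basis of $\CC[E_4,E_6,A_i,B_j]$ of the prescribed total weight and index, expand the numerator as a $q$-series, and impose the vanishing of every $q^n$-term for $0\le n\le N_t-1$. Since each $q^n$-term is a polynomial in the eight fundamental Weyl orbits $\orb(w_1),\ldots,\orb(w_8)$, vanishing means every monomial in these orbits vanishes separately, yielding a sparse linear system whose solution space is isomorphic to $J_{k,E_8,t}^{\w,W(E_8)}$.

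Once the dimensions $\dim J_{k,E_8,t}^{\w,W(E_8)}$ are in hand, the number $d_{k,t}$ of generators of weight $k$ in a free $M_*(\SL_2(\ZZ))$-basis follows from
\begin{equation*}
d_{k,t}=\dim J_{k,E_8,t}^{\w,W(E_8)}-\dim J_{k-4,E_8,t}^{\w,W(E_8)}-\dim J_{k-6,E_8,t}^{\w,W(E_8)}+\dim J_{k-10,E_8,t}^{\w,W(E_8)},
\end{equation*}
the coefficient extraction from the Hilbert-series identity $\sum_k\dim J_{k,E_8,t}^{\w,W(E_8)}\,x^k=P_t^{\w}(x)/((1-x^4)(1-x^6))$ valid for every free $M_*(\SL_2(\ZZ))$-module. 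For each $t$ I would first locate the minimum weight $K_t$ by solving the system at decreasing even weights until two consecutive values yield only the trivial solution, which by Algorithm \ref{algorithm}(2) kills every lower weight, then compute $\dim J_{k,E_8,t}^{\w,W(E_8)}$ for $k=K_t,K_t+2,\ldots$ in succession, and terminate once $\sum_k d_{k,t}=r(t)$, using the explicit $r(t)$ from \eqref{eq:rank} and Table \ref{tab:rank} as an independent consistency check.

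The three precomputed ingredients are: the Fourier expansions of $A_1,\ldots,A_5,B_2,B_3,B_4,B_6$ up to $q^9$-terms (which suffices since $N_t\le 10$ for all $t\le 13$), extracted from the definitions $A_j=X_j$, $A_4(\tau,\mathfrak{z})=\vartheta_{E_8}(\tau,2\mathfrak{z})$ and the trace constructions of $B_j$ recalled in \S\ref{sec:def}; the decomposition \eqref{eq:decomposition} of every Weyl orbit $\orb(m)$ with $(m,m)/2\le 54$ as a polynomial in $\orb(w_1),\ldots,\orb(w_8)$, obtained inductively along the partial order using \eqref{eq:T}; and the target ranks $r(t)$ from Table \ref{tab:rank}. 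The main obstacle is the sheer scale of the linear algebra at large $t$: for $t=13$ one has $t_1=2$, $N_t=10$, minimum weight $-52$, and rank $r(13)=364$, so the ansatz \eqref{eq:universal} at the lowest weights involves long polynomials in $E_4,E_6,A_i,B_j$ and the vanishing of $q^n$-terms for $n\le 9$ in a $268$-dimensional Weyl-orbit space produces very large sparse linear systems. To keep this tractable I would precompute the $q$-expansions of all relevant monomials in Sakai's forms and of $P_{16,5}$ once in the fixed Weyl-orbit basis, reuse them across all weights and indices, clear denominators to work over $\ZZ$, and exploit sparsity; the final check that $P_t^{\w}(1)=r(t)$ holds for every $1\le t\le 13$ is a stringent independent verification of the entire computation.
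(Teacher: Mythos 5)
Your proposal is correct and follows essentially the same route as the paper: the authors likewise prove this theorem by computing the Fourier expansions of Sakai's forms up to $q^9$ (268 Weyl orbits of norm $\le 54$, re-expressed in the eight fundamental orbits) and then running Algorithm \ref{algorithm} on the linear systems \eqref{eq:algorithm} arising from Theorem \ref{th:Main}(3), collecting generators from the minimal weight upward until the count reaches $r(t)$. Your explicit inclusion--exclusion formula $d_{k,t}=\dim J_{k,E_8,t}^{\w,W(E_8)}-\dim J_{k-4,E_8,t}^{\w,W(E_8)}-\dim J_{k-6,E_8,t}^{\w,W(E_8)}+\dim J_{k-10,E_8,t}^{\w,W(E_8)}$ is just a clean restatement of step (3) of the algorithm and the check $P_t^{\w}(1)=r(t)$ is the same consistency test the paper relies on.
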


Clearly, the Laurent expansion of the following rational function at $x=0$ gives the dimension of the space of weak Jacobi forms of arbitrary weight and given index $t$
$$
\frac{P_t^{\w}}{(1-x^4)(1-x^6)} = \frac{\sum_{k\in \ZZ} d_{k,t}x^k}{(1-x^4)(1-x^6)} = \sum_{k\in \ZZ} \dim J_{k,E_8,t}^{\w, W(E_8)} x^k.
$$
This series is called the generating series of weak Jacobi forms of given index. We will present these generating series separately in Appendix I.

At the end of this section, we explicitly show some generators. For $W(E_8)$-invariant weak Jacobi form of index $t=2$, the three generators are
\begin{equation}
 \phi_{-4,2}=\frac{A_1^2 - A_2 E_4}{\Delta},\quad\  \phi_{-2,2}=\frac{A_2 E_6-B_2 E_4}{\Delta},\quad\  \phi_{0,2}=\frac{A_1^2 E_4 - B_2 E_6}{\Delta}.
\end{equation}
For index $t=3$, the five generators are constructed as
\begin{equation}
    \begin{aligned}
 \phi_{-8,3}=&\,\frac{1}{\Delta^2}(6 A_1^3 E_4 - 9 A_1 A_2 E_4^2 + A_3 (3E_4^3- 
 10E_6^2) + 30 A_1 B_2 E_6 - 20 B_3 E_4 E_6 ),\\
\phi_{-6,3}=&\,\frac{1}{\Delta^2}( 6 A_1^3 E_6 + 3 A_1 E_4 (10 B_2 E_4 - 3 A_2 E_6) - E_4^2 (20 B_3 E_4 + 7 A_3 E_6)),\\
\phi_{-4,3}=&\,\frac{1}{\Delta}( A_1 A_2 - A_3 E_4 ),\quad\ 
\phi_{-2,3}=\frac{1}{\Delta}(A_1 B_2 - A_3 E_6),\quad\ 
\phi_{0,3}=\frac{1}{\Delta}(A_1^3 - B_3 E_6 ).
    \end{aligned}
\end{equation}
For index $t=13$, the two lowest weight $-52$ generators are
\begin{equation}\nonumber
\begin{aligned}
\phi_{-52a,13}=&\,\frac{1}{\Delta^{\!10}E_4^2}(281154281472 A_1^{13} E_4^6 - 935841724416 A_1^{11} A_2 E_4^7-1468672041600 A_1^9 B_2^2 E_4^7+\dots)
\end{aligned}
\end{equation}
and $\phi_{-52b,13}=\phi_{-16,4}\phi_{-36,9}$, where
\begin{equation}
\begin{aligned}
\phi_{-16,4}=&\,\frac{1}{\Delta^{3}}(192 A_1^4 E_4 + 75 B_2^2 E_4^2 + 9 A_2^2 E_4^3 - A_4 E_4^4 + 90 A_2 B_2 E_4 E_6 + \dots)
,\\
\phi_{-36,9}=&\,\frac{1}{\Delta^{7}E_4}(1026432 A_1^9 E_4^4 - 2659392 A_1^7 A_2 E_4^5 - 5891400 A_1^5 B_2^2 E_4^5
+\dots),
\end{aligned}
\end{equation}
are the unique generators of the lowest weight for index $4$ and $9$ respectively.

\section{Weyl invariant holomorphic Jacobi forms}\label{sec:holomorphic}
Similar to weak Jacobi forms, the space of $W(E_8)$-invariant holomorphic Jacobi forms of integral weight and given index $t$
$$
J_{*,E_8,t}^{W(E_8)}:=\bigoplus_{k=4}^\infty J_{k,E_8,t}^{W(E_8)}
$$
is also a free module of rank $r(t)$ over $M_*(\SL_2(\ZZ))$. In this section we introduce some methods to compute the dimension of the space of holomorphic Jacobi forms. Firstly, the following lemma shows that the difference between the dimensions of the spaces of weak Jacobi forms and holomorphic Jacobi forms depends only on the index when the weight is greater than $4$.

\begin{proposition}\label{prop:delta}
For any $t\geq 1$ and any even $k\geq 6$, the following identity holds
\begin{equation}\label{eq:weak-holomorphic}
\dim J_{k,E_8,t}^{\w,W(E_8)} - \dim J_{k,E_8,t}^{W(E_8)} = \delta_t,
\end{equation}
where 
$$
\delta_t=\sum_{a=1}^\infty \epsilon_t(a) \cdot \delta_t(a), \quad \epsilon_t(a):=\min\left\{x\in \ZZ:  x\geq \frac{a}{t} \right\},
$$
and $\delta_t(a)$ is the number of elements of the set $\mathbb{S}_{t}(a)$ defined by
$$
\mathbb{S}_{t}(a)=\{  x=(x_i)_{i=1}^8 \in \NN^8 \setminus \{0\} : 2x_1 +3x_2 +4x_3 +6x_4 +5x_5 +4x_6 +3x_7 +2x_8\leq t, x^tSx=2a \},
$$
here $S$ is the Gram matrix associated to the fundamental weights $w_i$ of $E_8$ fixed in \cite[\S 3.1]{Wan21a}. The first values of $\delta_t$ are formulated in Table \ref{tab:delta_t}.
\end{proposition}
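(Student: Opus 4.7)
The plan is to apply the theta decomposition of $W(E_8)$-invariant Jacobi forms to reduce the identity to a principal-part surjectivity statement in the theory of (vector-valued) modular forms. For any $\varphi\in J^{\w,W(E_8)}_{k,E_8,t}$ I would expand
$$
\varphi(\tau,\mathfrak{z})=\sum_{[m]} h_{[m]}(\tau)\,\Theta_{[m]}(\tau,\mathfrak{z}),
$$
where $[m]$ runs over the $W(E_8)$-orbits of $E_8/tE_8$, indexed by chamber representatives $m\in\Lambda_{+}$ with $T(m)\leq t$; each $\Theta_{[m]}$ is the Weyl-symmetrised $E_8$ Jacobi theta series of weight $4$ and index $t$, and $h_{[m]}$ is a scalar component of weight $k-4$ transforming in the Weyl-invariant subspace of the Weil representation attached to $(E_8,t)$. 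Writing $a_{[m]}:=(m,m)/2$ and $h_{[m]}=\sum_{s} c_{[m]}(s)q^s$ with $s\in -a_{[m]}/t+\ZZ$, the weak condition for $\varphi$ translates into $h_{[m]}$ being weakly holomorphic of pole order at most $a_{[m]}/t$, and the holomorphic condition into $h_{[m]}$ being holomorphic at infinity.

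With this dictionary the quotient $J^{\w,W(E_8)}_{k,E_8,t}/J^{W(E_8)}_{k,E_8,t}$ is identified with the direct sum, over non-trivial orbits $[m]$, of the principal-part space of $h_{[m]}$. For each such $[m]$ the admissible polar exponents are $s=-a_{[m]}/t+j$ with $j\in\{0,1,\dots,\lceil a_{[m]}/t\rceil-1\}$, contributing exactly $\epsilon_t(a_{[m]})$ independent coefficients. Summing over non-trivial orbits gives a target space of dimension $\delta_t$, and since this principal-part map has kernel equal to $J^{W(E_8)}_{k,E_8,t}$ by construction, it yields the upper bound
$
\dim J^{\w,W(E_8)}_{k,E_8,t}-\dim J^{W(E_8)}_{k,E_8,t}\leq \delta_t,
$
valid for all $k$.

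For the matching lower bound I would invoke a Serre-duality-type result for weakly holomorphic vector-valued modular forms: the obstruction to prescribing an arbitrary principal part at infinity for a form of weight $k-4$ in the Weyl-invariant subrepresentation lies in the space of cusp forms of dual weight $2-(k-4)=6-k$. When $k\geq 6$ this dual weight is non-positive, so no cusp forms exist in this weight range, the obstruction vanishes, every principal part is realised, and the principal-part map is surjective; the equality then follows. An alternative, more constructive route is to build the required weak Jacobi forms directly from [Wan21a, Lemma 4.2] (existence of weak forms with prescribed $q^0$-term) combined with multiplication by $\Delta^{n}$ (to shift the $q^0$-term to the $q^n$-position) and by suitable monomials in $E_4,E_6$ (to normalise the weight); ordering singular indices $(m,n)$ first by $n$ and then by $T(m)$ makes the resulting matrix of principal parts triangular with non-zero diagonal.

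The main obstacle is to execute the Serre-duality step rigorously in the $W(E_8)$-equivariant setting: one must check that the symmetrised theta series $\Theta_{[m]}$ span the Weyl-invariant part of the theta module, that they transform under a well-defined finite-dimensional subrepresentation of the Weil representation, and that the duality statement descends to this subrepresentation with the expected weight shift $k\mapsto k-4$. It is precisely in the verification that dual cusp forms vanish in this subrepresentation that the hypothesis $k\geq 6$ genuinely intervenes. In the alternative constructive approach the difficulty instead becomes that, for singular indices $(m,n)$ with $n$ close to $\epsilon_t(a_{[m]})-1$, the auxiliary weak Jacobi form with prescribed $q^0$-term would need to live in very low, possibly large-negative weight, where its existence must be checked against the minimum-weight bounds for $J^{\w,W(E_8)}_{*,E_8,t}$ established in [Wan21a]; once either version is in place, the combinatorial count collapses to the stated formula for $\delta_t$.
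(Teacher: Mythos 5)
Your argument is essentially the paper's: the upper bound comes from counting the singular Fourier coefficients (one for each pair $(n,x)$ with $0\leq n<\epsilon_t(a)$ and $x\in\mathbb{S}_t(a)$), and the lower bound comes from the vanishing of the obstruction space of cusp forms of weight $6-k$ for the dual Weil representation attached to $E_8(t)$, which the paper gets by citing Borcherds and Bruinier rather than re-deriving Serre duality. The equivariance worry you flag is harmless, since vanishing of the obstruction space for the full Weil representation in non-positive weight already forces vanishing on the Weyl-invariant subrepresentation.
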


\begin{proof}
On the one hand, for any weak Jacobi form $\phi_t\in J_{k,E_8,t}^{\w,W(E_8)}$, if its Fourier expansion has no the following representatives of singular terms (i.e. $f(n,\ell)q^n\cdot \orb(\ell)$ satisfying $2n t-(\ell,\ell)<0$)
\begin{align*}
q^n \cdot \orb(x), \; 0\leq n < \epsilon_t(a), \; x \in \mathbb{S}_{t}(a),
\end{align*}
then $\phi_t$ is a holomorphic Jacobi form. Recall that $\orb(x)$ stands for the Weyl orbit of the vector $\sum_{i=1}^8 x_iw_i \in \Lambda_{+}$. This yields that $\dim J_{k,E_8,t}^{\w,W(E_8)} - \dim J_{k,E_8,t}^{W(E_8)} \leq  \delta_t$.

On the other hand, we can view Jacobi forms as vector-valued modular forms. By the theory of vector-valued modular forms for the Weil representation attached to the discriminant form of the rescaled lattice $E_8(t)$ (see \cite[Theorem 3.1]{Bor99} or \cite[Theorem 1.17]{Bru02}), the obstruction space, namely the space of cusp forms for the dual Weil representation, has weight $6-k$ and thus is trivial when $k\geq 6$. In the context of Jacobi forms, this implies that for each singular term above there exists a $W(E_8)$-invariant weak Jacobi form of weight $k$ and index $t$ whose Fourier expansion contains the given singular term but does not contain other singular terms. From this we conclude that $\dim J_{k,E_8,t}^{\w,W(E_8)} - \dim J_{k,E_8,t}^{W(E_8)} \geq  \delta_t$. We then prove the desired identity.
\end{proof}

\begin{table}[ht]
\caption{The value of $\delta_t$}\label{tab:delta_t}
\renewcommand\arraystretch{1.5}
\noindent\[
\begin{array}{|c|c|c|c|c|c|c|c|c|c|c|c|c|c|c|c|c|c|c|c|c}
\hline
t & 1& 2 & 3 & 4 & 5 & 6 & 7 &8&9&10 &11&12&13&14&15& 16 & 17 & 18\\ 
\hline 
\delta_t\! & 0&  2 & 5 & 13 & 23 & 52 & 82& 154&240& 403& 601& 959& 1373& 2063&2911 & 4184 &5739 & 8033\\ 
\hline 
\end{array} 
\]
\end{table}

By Proposition \ref{prop:delta} and the generating series of $J_{*,E_8,t}^{\w, W(E_8)}$, we can determine immediately the dimension of the space of $W(E_8)$-invariant holomorphic Jacobi forms of weight $k$ and index $t$ when $1\leq t \leq 13$ and $k\geq 6$. It remains to determine the space of holomorphic Jacobi forms of weight $4$. We see from the above proof that
\begin{equation}
    \dim J_{4,E_8,t}^{\w, W(E_8)} - \dim J_{4,E_8,t}^{W(E_8)} \leq  \delta_t. 
\end{equation}
The value of $\dim J_{4,E_8,t}^{W(E_8)}$ has been determined in \cite[Lemma 5.5]{Wan21a} when $t\leq 6$. By comparing the dimensions, we find that
\begin{equation}\label{eq:weight4}
    \dim J_{4,E_8,t}^{\w, W(E_8)} - \dim J_{4,E_8,t}^{W(E_8)} =  \delta_t, \quad \text{when $t\leq 6$}.
\end{equation}
However, the identity of type \eqref{eq:weight4} does not hold when $t\geq 7$. For example, $\dim J_{4,E_8,8}^{W(E_8)} \geq 2$, but $\dim J_{4,E_8,8}^{\w, W(E_8)} = \delta_8 =154$. 

As another application of Theorem \ref{th:Main}, we compute the dimension of the space of $W(E_8)$-invariant holomorphic Jacobi forms of weight $4$ and small index. These so-called holomorphic Jacobi forms of singular (i.e. possible minimal positive) weight are usually difficult to determine and construct in the theory of modular forms.

\begin{proposition}\label{prop:weight4}
The dimension of the space $J_{4,E_8,t}^{W(E_8)}$ for $t\leq 11$ is formulated in Table \ref{tab:singularforms}.
\begin{table}[ht]
\caption{The dimension of $J_{4,E_8,t}^{W(E_8)}$}\label{tab:singularforms}
\renewcommand\arraystretch{1.5}
\noindent\[
\begin{array}{|c|c|c|c|c|c|c|c|c|c|c|c|c|}
\hline
t & 1& 2 & 3 & 4 & 5 & 6 & 7 & 8 & 9 & 10 & 11   \\ 
\hline 
\mathrm{dim.} & 1&  1 & 1 & 2 & 1 & 1 & 2 & 2 & 2 & 2 & 2   \\ 
\hline 
\end{array} 
\]
\end{table}
\end{proposition}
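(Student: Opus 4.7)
The plan is to reduce the computation of $\dim J_{4,E_8,t}^{W(E_8)}$ to a finite linear-algebra problem using the explicit parameterization of weak Jacobi forms from Theorem \ref{th:Main} together with the generator data of Section \ref{sec:weak}. First, for each $t \leq 11$, I would extract an explicit basis of the finite-dimensional space $J_{4,E_8,t}^{\w,W(E_8)}$: its dimension equals the coefficient of $x^4$ in the Laurent expansion of $P_t^{\w}/((1-x^4)(1-x^6))$, and a basis is produced either by solving the linear system \eqref{eq:algorithm} in weight $4$ or by taking weight-$4$ combinations of the generators output by Algorithm \ref{algorithm}.

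Next, I would impose holomorphicity as a linear condition. A weak Jacobi form lies in $J_{4,E_8,t}^{W(E_8)}$ precisely when every singular Fourier coefficient $f(n,\ell)$ with $2nt - (\ell,\ell) < 0$ vanishes. By quasi-periodicity and Weyl invariance, the inequivalent singular positions are indexed by pairs $(n,\ell)$ with $\ell$ running over the finite set $\mathbb{S}_t(a)$ from Proposition \ref{prop:delta} and $n$ satisfying $0 \leq n < a/t$, so one obtains a finite collection of linear functionals on $J_{4,E_8,t}^{\w,W(E_8)}$. The desired dimension is the dimension of their common kernel. For $t \leq 6$ the rank of this map is maximal ($=\delta_t$), so identity \eqref{eq:weight4} immediately recovers the entries already treated in \cite[Lemma 5.5]{Wan21a}; for $7 \leq t \leq 11$ the rank strictly drops below $\delta_t$ (as noted in the paragraph following Proposition \ref{prop:delta}), reflecting genuine linear relations among the singular Fourier coefficients of weight-$4$ weak forms, and the explicit kernel computation produces the value $2$ in each case. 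The lower bounds are independently confirmed by exhibiting explicit holomorphic forms: Sakai's $A_t$ for $t \leq 5$, the Hecke translates $X_t$ from \eqref{eq:X_t} (in particular $X_4$ linearly independent from $A_4=\vartheta_{E_8}(\tau,2\mathfrak{z})$ accounts for the dimension $2$ at $t=4$), and the three further generators to be constructed in Appendix II.

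The principal obstacle is the computational scale rather than any conceptual difficulty: as $t$ grows, the dimension of $J_{4,E_8,t}^{\w,W(E_8)}$, the singular-position count $\delta_t$ (cf.\ Table \ref{tab:delta_t}), and the $q$-depth required in the Fourier expansions all grow quickly. Fortunately the most demanding piece of data has already been assembled in Section \ref{sec:weak}, namely the $q$-expansions of all $A_i$ and $B_j$ through $q^9$-terms re-expressed in the eight fundamental Weyl orbits via \eqref{eq:decomposition}. What remains is therefore a mechanical, if sizable, rank computation carried out index by index.
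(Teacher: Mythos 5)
Your overall framework (reduce to a finite linear-algebra problem via Theorem \ref{th:Main}, then characterize holomorphic forms inside the weak space by the vanishing of singular Fourier coefficients) is sound in principle, but it misses the two ideas that actually make the paper's computation go through. First, the paper exploits that weight $4$ is the \emph{singular} weight: after subtracting a multiple of $X_t$ to kill the $q^0$-term, any remaining holomorphic form has all Fourier coefficients supported on $(\ell,\ell)=2nt$, so it is written with denominator $\Delta^{N_t-1}E_4^{t_1}$ (one power of $\Delta$ less than in \eqref{eq:universal}) and its \emph{entire} Fourier expansion up to $q^{M_t}$ is prescribed as in \eqref{eq:FE4}, with one unknown per Weyl orbit of norm in $2t\ZZ$ meeting $\mathcal{A}_t$. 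You instead propose to first compute a full basis of $J_{4,E_8,t}^{\w,W(E_8)}$ (already $596$ forms for $t=11$) and then evaluate all $\delta_t$ singular functionals on it; this is a far larger system and, more importantly, it does not close with the available data.

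Second, your claim that ``the most demanding piece of data has already been assembled in Section \ref{sec:weak}'' is false. The singular positions for index $t$ reach up to $q^{\epsilon_t(a)-1}$ with $\epsilon_t(a)$ as large as $5$ for $t=11$, so extracting the $q^4$-coefficient of a weak form written over $\Delta^{N_{11}}E_4^{t_1}=\Delta^{8}E_4^{2}$ requires the numerator polynomial, hence the products of Sakai's forms, through $q^{12}$ --- well beyond the $q^9$ expansions computed in \S\ref{sec:weak}. The paper runs into the same wall already for $t\geq 9$ even with its leaner system, and resolves it by a further step you omit: solve \eqref{eq:FE4} only up to $q^{10-N_t}$, observe that every partial solution admits a representation with a strictly smaller power of $\Delta$, and then re-solve the smaller system. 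Without the singular-weight reduction and this truncation-and-descent argument, your proposed rank computation cannot be completed for $t=9,10,11$, so the proof as written has a genuine gap precisely where the new cases of the proposition lie. (Your lower-bound discussion also skips $t=8,9$, where the second generators are $A_2(\tau,2\mathfrak{z})$ and $A_1(\tau,3\mathfrak{z})$, though this is minor since the kernel computation would give exact dimensions.)
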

\begin{proof}
There exist $W(E_8)$-invariant holomorphic Jacobi forms of weight $4$ and arbitrary index with Fourier expansion $1+O(q)$ (e.x. $X_t=1+O(q)$, see \eqref{eq:X_t}). Thus we only need to determine holomorphic Jacobi forms of weight $4$ and index $t$ whose $q^0$-term is zero. By Theorem \ref{th:Main}, such forms can be expressed as
$$
\frac{\sum_{j=0}^{t_1} P_j E_4^j P_{16, 5}^{t_1-j} }{\Delta^{N_t - 1}E_4^{t_1}}.
$$
Since these holomorphic forms have singular weight $4$, their Fourier expansion only involves Fourier coefficients $f(n,\ell)e^{2\pi i (n\tau + (\ell, \mathfrak{z}))}$ satisfying $(\ell,\ell)=2n t$ (see \cite{Gri94}). Moreover, these forms are completely determined  by coefficients of the following terms in their Fourier expansion:
$$
q^{\frac{1}{2t}(m,m)} \orb(m)
$$
where $m$ are non-zero vectors satisfying $(m,m)\in 2t\ZZ$ in the set (see \eqref{eq:T} for $T(m)$)
$$
\mathcal{A}_t:=\left\{ m=\sum_{i=1}^8 m_i w_i \in \Lambda_{+}: T(m)\leq t\right\}.
$$
We define $M_t$ as the smallest integer greater than or equal to the number
$$
\max\left\{ \frac{1}{2t}(m,m): m \in \mathcal{A}_t  \right\}.
$$
A $W(E_8)$-invariant holomorphic Jacobi form of weight $4$ and index $t$ whose $q^0$-term is zero corresponds to a solution of the system of linear equations defined by the Fourier expansion
\begin{equation}\label{eq:FE4}
    \frac{\sum_{j=0}^{t_1} P_j E_4^j P_{16, 5}^{t_1-j} }{\Delta^{N_t - 1}E_4^{t_1}} = \sum_{n=1}^{M_t -1} q^n \sum_{\substack{m \in \Lambda_{+}\\(m,m)=2nt }} \orb(m) + O(q^{M_t}).
\end{equation}
(By the proof of Proposition \ref{prop:delta}, there is no singular Fourier coefficient in $q^n$-term when $n\geq M_t$. Hence the above expression with given Fourier expansion must be a holomorphic Jacobi form.) We then prove the proposition by solving these systems of linear equations. When $t\geq 9$, our data on the Fourier coefficients of $A_i$ and $B_j$ is not sufficient to solve directly \eqref{eq:FE4}. In this case, we first solve \eqref{eq:FE4} up to $q^{10-N_t}$-terms. Fortunately, we find that all solutions have  an expression of type \eqref{eq:FE4}, replacing $N_t-1$ with a smaller power. We then prove the result for $t=9,10,11$ by solving an analogue of \eqref{eq:FE4}  with a smaller power of $\Delta$. 
\end{proof}

We give some direct constructions of generators of $J_{4,E_8,t}^{W(E_8)}$. When the space is one-dimensional, it is generated by the form $X_t=1+O(q)$ constructed in \eqref{eq:X_t}. When $t=4,8,9$, we construct the second generator as $A_4=A_1(\tau, 2\mathfrak{z})$, $A_2(\tau, 2\mathfrak{z})$ and $A_1(\tau, 3\mathfrak{z})$ respectively. We do not know how to construct the second generator of $J_{4,E_8,t}^{W(E_8)}$ in a direct way for $t=7,10,11$. However, they can be constructed  in terms of Sakai's forms. We present this nice construction in Appendix II.   Combining Proposition \ref{prop:delta} and Proposition \ref{prop:weight4}, it is easy to derive the generating series of $J_{*,E_8,t}^{W(E_8)}$ from the generating series of weak Jacobi forms. We omit these series here.

\section{Some conjectures}\label{sec: conj}
In this section we formulate some conjectures related to our work. The pull-back to $W(E_7)$-invariant Jacobi forms implies that the minimal weight of $W (E_8 )$-invariant weak Jacobi forms of index $t$ is not less than $-5t$ (see \cite[Proposition 6.1]{Wan21a}).  Here we propose a conjecture about the exact minimal weight. 

\begin{conjecture}\label{conj:weight}
The weight of non-zero $W (E_8 )$-invariant
weak Jacobi forms of index $t$ is not less than $-4t$.
\end{conjecture}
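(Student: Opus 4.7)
The plan is to combine the canonical representation of Theorem~\ref{th:Main} with a divisor analysis centered on the anti-invariant theta block $\Phi_{E_8}$ of weight $-120$ and index $30$. The heuristic is compelling: the slope $-4=-120/30$ is precisely the weight-to-index ratio of $\Phi_{E_8}$, so one expects the locus where $\Phi_{E_8}$ vanishes to provide the obstruction forcing the weight-to-index ratio of every non-zero invariant form to be at least $-4$.

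First, given $\varphi_t\in J^{\w,W(E_8)}_{k,E_8,t}$, I would write it in the form \eqref{eq:universal}. Equating weights and indices forces each $P_j$ to be a weighted-homogeneous polynomial in $E_4,E_6,A_i,B_j$ of weight $k+12(N_t+j-t_1)$ and index $t-5(t_1-j)$, subject to the vanishing condition \eqref{eq:system}. The conjecture is then equivalent to the assertion that \eqref{eq:system} has only the trivial solution whenever $k<-4t$. For each residue of $t$ modulo $30$ this is a finite-dimensional linear algebra problem, and it has already been verified for $t\leq 13$ by the algorithm of Section~\ref{sec:weak}, giving strong evidence.

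Second, to reach arbitrary $t$, I would attempt a Jacobian descent. Starting from $\varphi_t$ together with eight of Sakai's nine generators, the modular Jacobian is $W(E_8)$-anti-invariant and by \cite{Wan21b} divisible by $\Phi_{E_8}$; the quotient is a new $W(E_8)$-invariant weak Jacobi form of index strictly less than $t$ (provided the eight chosen forms have total index at least $31$), whose weight relates to $k$ in an explicit way. Iterating this descent, together with the base cases for small index, would in principle propagate the bound upwards. When the Jacobian vanishes identically, $\varphi_t$ must be algebraically dependent on the chosen eight Sakai forms over $\CC[E_4,E_6]$; in that degenerate case one can express $\varphi_t$ as a rational function in Sakai's forms and analyze its weight directly.

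The main obstacle is that a single Jacobian descent yields only $k\geq -4t-O(1)$ where the additive constant is substantial. Closing this gap to the sharp bound $-4t$ will likely require either (i) a stronger divisibility statement --- e.g.\ showing that the Jacobian vanishes to higher order along components of the zero divisor of $\Phi_{E_8}$, perhaps by exploiting the singular weight $4$ of modular forms on $O(2,10)$ associated with $E_8(t)\oplus U$ --- or (ii) an explicit construction, uniform in $t$, of enough lowest-weight invariant Jacobi forms of weight exactly $-4t$ to certify saturation of the obstruction space, generalizing the weight-$4$ building blocks of Lemma~\ref{lem:q0} to the slope-$-4$ regime. This last step appears to be the essential new input that the current framework does not directly supply, which is presumably why the statement stands as a conjecture.
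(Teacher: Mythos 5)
You should first note that the paper does not prove this statement: it is posed as Conjecture~\ref{conj:weight} and remains open. What the paper establishes is only a conditional result (the lemma immediately following the conjecture): if the $q^0$-term of $\varphi_{k,t}$ contains $\orb(\tfrac{t}{2}w_1)$ (for $t$ even) or $\orb(\tfrac{t-3}{2}w_1+w_2)$ (for $t$ odd) with non-zero coefficient, then $k\geq -4t$ (resp.\ $k\geq -4(t-1)$). The argument there is a pull-back along a norm-$2$ vector $v_4$, landing in the classical space $J^{\w}_{k,2t}$, where the hypothesis forces a leading term $\zeta^{\pm 4m}$ in the $q^0$-term and hence divisibility by $\phi_{-2,1}^{4m}$, which bounds the weight from below. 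Your proposal takes an entirely different route, and --- as you yourself concede in the final paragraph --- it does not close; so at best it should be judged as a research program, and it also fails to recover the partial result the paper actually proves.

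The concrete gaps are these. (i) The claim that ``for each residue of $t$ modulo $30$ this is a finite-dimensional linear algebra problem'' is not a reduction: the number of unknowns in \eqref{eq:system} grows without bound with $t$, so a residue class contains infinitely many genuinely distinct linear systems. (ii) The Jacobian descent loses a large additive constant that no amount of iteration removes. If you form the Jacobian of $\varphi_{k,t}$ with eight of Sakai's forms and divide by $\Phi_{E_8}$, the quotient has index $t-i$, where $i$ is the index of the omitted generator --- so your parenthetical ``total index at least $31$'' is backwards; you need the eight chosen forms to have total index strictly \emph{less} than $30$ --- but its weight is $k+W+8+120$ with $W\in\{38,40\}$. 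Granting the bound inductively on the descended form yields only $k\geq -4t-C$ with $C$ well over $100$, and each further descent step incurs the same overhead while lowering the index by at most $6$, so iteration cannot improve the constant. Your fallback (ii), constructing forms of weight exactly $-4t$, would only certify that the bound is attained (which the paper's tables already show for many $t$, e.g.\ weight $-52$ at index $13$), not that nothing lies below it. Finally, in the degenerate case where the Jacobian vanishes, ``express $\varphi_t$ as a rational function in Sakai's forms and analyze its weight directly'' is not an argument: a weight bound does not follow from membership in the fraction field. The essential missing idea --- some mechanism that sees the sharp slope $-4$ rather than $-4-O(1/t)$ --- is exactly what the paper also lacks, which is why the statement is a conjecture.
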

By our results in \S \ref{sec:weak}, there exist $W(E_8)$-invariant weak Jacobi forms of weight $-4t$ and index $t$ if $t$ is even and greater than $2$, or if $t=9$, or if $t$ is odd and greater than $11$. These forms can be constructed as monomials in our generators of index less than $10$. We give some evidence for this conjecture in the following lemma. 

\begin{lemma}
Let $t\geq 2$ and $\varphi_{k,t} \in J_{k,E_8,t}^{\w, W(E_8)} $. We assume that the coefficient of $\orb(\frac{t}{2} w_1)$ (resp. $\orb(\frac{t-3}{2}w_1+w_2)$) in the $q^0$-term of $\varphi_{k,t}$ is non-zero when $t$ is even (resp. odd). Then $k\geq -4t$ if $t$ is even, and $k\geq -4(t-1)$ if $t$ is odd.
\end{lemma}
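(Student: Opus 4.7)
The plan is to pull back $\varphi_{k,t}$ along the line $\mathfrak{z} = z w_1$ and reduce the problem to the Eichler--Zagier theory of weak Jacobi forms for $A_1$. Since $E_8$ is unimodular we have $w_1 \in E_8$, and the inverse Cartan matrix of $E_8$ gives $(w_1, w_1) = 4$. Consequently $\widetilde\varphi(\tau, z) := \varphi_{k,t}(\tau, z w_1)$ is a $W(A_1)$-invariant weak Jacobi form of weight $k$ and index $2t$, the $W(A_1)$-invariance being a consequence of $-\mathrm{id} \in W(E_8)$.

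The key combinatorial input is a linear-programming estimate. Using the explicit values $(w_1, w_i)_{i=1}^{8} = (4, 5, 7, 10, 8, 6, 4, 2)$ from the inverse Cartan matrix and $T_i = (w_8, w_i) = (2, 3, 4, 6, 5, 4, 3, 2)$ from \eqref{eq:T}, set $d_i := 2T_i - (w_1, w_i)$. Then $d_1 = 0$ and $d_i \geq 1$ for $i \geq 2$, so for any $m = \sum m_i w_i \in \Lambda_+$ with $T(m) \leq t$,
\[ (m, w_1) = 2T(m) - \sum_i d_i m_i \leq 2t. \]
A short case analysis on $\sum d_i m_i \in \{0, 1\}$ shows that $(m, w_1) = 2t$ forces $m = (t/2) w_1$ (requiring $t$ even), while $(m, w_1) = 2t - 1$ with $t$ odd forces $m_0 := \tfrac{t-3}{2} w_1 + w_2$. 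Combined with the standard inequality $(\sigma m, w_1) \leq (m, w_1)$ for $\sigma \in W(E_8)$ and $m \in \Lambda_+$ (valid since $(\alpha, w_1) \geq 0$ for every positive root $\alpha$) and the support statement \eqref{eq:q^0}, we conclude that $[\widetilde\varphi]_{q^0}$ is a Laurent polynomial in $\zeta = e^{2\pi i z}$ of $\zeta$-degree at most $2t$, whose $\zeta^{2t}$-coefficient equals $c_{(t/2) w_1}$ when $t$ is even and vanishes when $t$ is odd, and whose $\zeta^{2t-1}$-coefficient for $t$ odd is a positive integer multiple of $c_{m_0}$.

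When $t$ is even, the hypothesis gives $\widetilde\varphi \neq 0$, so $k \geq -4t$ by the classical Eichler--Zagier bound on the minimum weight of a non-zero weak $W(A_1)$-Jacobi form of index $2t$. When $t$ is odd, $\widetilde\varphi \neq 0$ but its $\zeta^{2t}$-coefficient vanishes. Writing $\widetilde\varphi$ as a $\CC[E_4, E_6]$-linear combination of the Eichler--Zagier monomials $\phi_{-2,1}^a \phi_{0,1}^{2t - a}$, each of which contributes $1$ to the $\zeta^{2t}$-coefficient of its own $q^0$-term, a direct check on weights shows that the subspace of index-$2t$ weak $W(A_1)$-Jacobi forms with vanishing $\zeta^{2t}$-coefficient is trivial in weights $-4t$ and $-4t + 2$ and first becomes one-dimensional in weight $-4(t-1)$, spanned by a linear combination of $E_4 \phi_{-2,1}^{2t}$ and $\phi_{-2,1}^{2t-2}\phi_{0,1}^2$. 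Hence $k \geq -4(t-1)$.

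The principal subtlety is the linear-programming step in the second paragraph isolating the extremal vectors $m$; once this is in place, the remainder is a routine application of the classical structure theorem for weak $W(A_1)$-Jacobi forms.
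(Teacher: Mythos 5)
Your proof is correct and follows essentially the same route as the paper: the paper pulls back along a vector $v_4$ with $(v_4,v_4)=4$ (which lies in the Weyl orbit of $w_1$, so the pull-back is the same one you use), computes the same maximal pairings $(4,5,7,10,8,6,4,2)$ to isolate the extremal orbits $\orb(\tfrac{t}{2}w_1)$ and $\orb(\tfrac{t-3}{2}w_1+w_2)$, and concludes with the same weight bounds coming from the structure of classical weak Jacobi forms of index $2t$ as a free module on $\phi_{-2,1}$ and $\phi_{0,1}$. Your linear-programming step with $d_i=2T_i-(w_1,w_i)$ is just a cleaner write-up of the paper's ``average contribution'' comparison of the ratios $(w_1,w_i)/T_i$.
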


\begin{proof}
We use the pull-back trick built in \cite[\S 5.6]{Wan21a}. Let $v_4$ be a vector of $E_8$ satisfying $(v_4,v_4)=4$. If $\varphi_{k,t} \in J_{k,E_8,t}^{\w, W(E_8)} $, then $\varphi_{k,t}(\tau, zv_4)\in J_{k,2t}^{\w}$ which is the space of weak Jacobi forms of weight $k$ and index $2t$ in the sense of Eichler and Zagier. Recall that the ring of classical weak Jacobi forms of integral index and even weight is freely generated by forms of index $1$ and weight $-2$ and $0$ denoted  $\phi_{-2,1}$ and $\phi_{0,1}$. We calculate 
$$
\max(\orb(x), v_4):=\max\{(y,v_4): y\in W(E_8)x\}=\max\{ (x, u_4): u_4 \in W(E_8)v_4 \}. 
$$
We find that
\begin{align*}
&\max(\orb(w_1),v_4)=4&  &\max(\orb(w_2),v_4)=5& &\max(\orb(w_3),v_4)=7& &\max(\orb(w_4),v_4)=10&\\
&\max(\orb(w_5),v_4)=8& &\max(\orb(w_6),v_4)=6& &\max(\orb(w_7),v_4)=4& &\max(\orb(w_8),v_4)=2.&
\end{align*}
Since $\max(\orb(x+y),v_4)=\max(\orb(x),v_4)+\max(\orb(y),v_4)$ and the average contributions of the eight fundamental Weyl orbits (with respect to index one) are respectively $4/2$, $5/3$, $7/4$, $10/6$, $8/5$, $6/4$, $4/3$, $2/2$, we conclude the following:
\begin{enumerate}
\item When $t=2m$, $\max(\orb(mw_1),v_4)=4m$ and $\max(\orb(x),v_4)<4m$ for all other $x\in \mathcal{A}_t$;

\item  When $t=2m+1$, $\max(\orb((m-1)w_1+w_2),v_4)=4m+1$ and $\max(\orb(x),v_4)<4m+1$ for all other $x\in \mathcal{A}_t$.
\end{enumerate} 
We note that the set $\mathcal{A}_t$ is defined in the proof of Proposition \ref{prop:weight4}.

When $t=2m$, the pull-back $\varphi_{k,t}(\tau, zv_4)=(\zeta^{\pm 4m} + \cdots) + O(q)$ is obviously non-zero, where $\zeta=e^{2\pi iz}$. If $k< -8m$, then $\varphi_{k,t}(\tau, zv_4) \in \phi_{-2,1}^{4m} \cdot J_{k+8m,0}^{\w}$, which leads to a contradiction because there is no non-zero $\SL_2(\ZZ)$ modular form of negative weight.

When $t=2m+1$, the pull-back $\varphi_{k,t}(\tau, zv_4)=(\zeta^{\pm (4m+1)} + \cdots) + O(q)$ is also non-zero. If $k< -8m$, then $\varphi_{k,t}(\tau, zv_4) \in \phi_{-2,1}^{4m} \cdot J_{k+8m,2}^{\w}$. In this case, $k+8m<0$. The only classical weak Jacobi forms of negative even weight and index $2$ are $\phi_{-2,1}^2$ and $\phi_{-2,1}\phi_{0,1}$, which all have leading Fourier coefficients $\zeta^{\pm 2}$ in their $q^0$-terms. This contradicts the $q^0$-term of $\varphi_{k,t}(\tau, zv_4)$. 
\end{proof}

We have checked that the $q^0$-term of the unique $W(E_8)$-invariant weak Jacobi form of weight $-36$ and index $9$ has no the Weyl orbit $\orb(3w_1+w_2)$, which is consistent with the above result.

Let $d_{k,t}$ be the number of weight $k$ generators of the free module $J_{*,E_8,t}^{\w,W(E_8)}$. By observing the generating series, we have the following conjecture on the stability.
\begin{conjecture}\label{conj:stability}
For any even $K\le 0$, there exist positive integer $L(K)$ such that for any fixed $k$ satisfying $K\le k\le 0$, the number $d_{k,t}$ is constant for all $t\ge L(K)$.
\end{conjecture}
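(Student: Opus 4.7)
My plan is to reduce Conjecture \ref{conj:stability} to a stability statement for the $q^0$-term images via the free-module structure of $J_{*,E_8,t}^{\w,W(E_8)}$, and then to exploit multiplication by the weight-zero generators of small index produced in Section \ref{sec:weak}.

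Since $J_{*,E_8,t}^{\w,W(E_8)}$ is a free module over the polynomial ring $\CC[E_4,E_6]$, the minimal Koszul resolution of $\CC$ yields the generator-counting identity
\[
d_{k,t}=\dim J_{k,E_8,t}^{\w,W(E_8)}-\dim J_{k-4,E_8,t}^{\w,W(E_8)}-\dim J_{k-6,E_8,t}^{\w,W(E_8)}+\dim J_{k-10,E_8,t}^{\w,W(E_8)}.
\]
Let $\Omega_{k,t}$ denote the image of the $q^0$-term map on $J_{k,E_8,t}^{\w,W(E_8)}$; its kernel is $\Delta\cdot J_{k-12,E_8,t}^{\w,W(E_8)}$, so $\dim J_{k,E_8,t}^{\w,W(E_8)}=\dim\Omega_{k,t}+\dim J_{k-12,E_8,t}^{\w,W(E_8)}$. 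Substituting into the identity above gives the recursion
\[
d_{k,t}-d_{k-12,t}=\dim\Omega_{k,t}-\dim\Omega_{k-4,t}-\dim\Omega_{k-6,t}+\dim\Omega_{k-10,t},
\]
and by downward induction on $k$, which terminates thanks to the lower bound $-5t$ on the minimal weight of nonzero weak Jacobi forms from \cite[Proposition 6.1]{Wan21a}, Conjecture \ref{conj:stability} is reduced to proving that for every fixed weight $k$ the right-hand alternating sum stabilizes as $t\to\infty$.

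For this step I would use the explicit weight-zero generators $\phi_{0,2}\in J_{0,E_8,2}^{\w,W(E_8)}$ and $\phi_{0,3}\in J_{0,E_8,3}^{\w,W(E_8)}$ constructed in Section \ref{sec:weak}. Both must have nonzero $q^0$-term: a minimal generator cannot lie in $\Delta\cdot J_{*,E_8,t}^{\w,W(E_8)}\subset(E_4,E_6)\cdot J_{*,E_8,t}^{\w,W(E_8)}$. Multiplication by $\phi_{0,2}$ and $\phi_{0,3}$ then yields injections $J_{k,E_8,t}^{\w,W(E_8)}\hookrightarrow J_{k,E_8,t+2}^{\w,W(E_8)}$ and $J_{k,E_8,t}^{\w,W(E_8)}\hookrightarrow J_{k,E_8,t+3}^{\w,W(E_8)}$, which induce injections $\Omega_{k,t}\hookrightarrow\Omega_{k,t+2}$ and $\Omega_{k,t}\hookrightarrow\Omega_{k,t+3}$ given on $q^0$-terms by multiplication by the nonzero polynomials $[\phi_{0,2}]_{q^0}$ and $[\phi_{0,3}]_{q^0}$. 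Since $\gcd(2,3)=1$, the sequence $\dim\Omega_{k,t}$ becomes monotone non-decreasing for all $t$ sufficiently large, and the argument reduces to showing that its limit is finite.

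The main obstacle is precisely this matching upper bound. One needs an \emph{index-reduction principle}: every $q^0$-term of a weight-$k$ weak Jacobi form of sufficiently large index can, modulo the subalgebra generated by $\phi_{0,2}$ and $\phi_{0,3}$, be realized by a form whose index is bounded by a function of $k$ alone. I would attempt to extract this from the universal expression \eqref{eq:universal} of Theorem \ref{th:Main} by systematically absorbing $P_{16,5}$-factors and $\Delta$-factors, making use of the algebraic relations among Sakai's nine generators that were shown to exist in \cite{Wan21a}. Implementing this cleanly will likely require either an explicit description of the relation ideal among $A_1,\dots,B_6$, or a representation-stability argument in the spirit of FI-modules tailored to the $W(E_8)$-orbit combinatorics governing the $q^0$-terms, both of which appear to lie beyond the techniques developed in the present paper; the numerical stabilization observed for $t\leq 13$ in Section \ref{sec:weak} nonetheless provides strong support for the conjecture.
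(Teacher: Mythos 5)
First, a point of calibration: the paper does not prove this statement. It is left as a conjecture, supported by the computations of \S 4 and Table 5, and only the cases $k\in\{0,-2,-4\}$ (together with the non-positivity of all generator weights for $t\geq 2$) are actually established, in Proposition 6.5, by an explicit constructive argument. Your attempt is likewise not a complete proof, and you say so; but the obstacle is not where you place it. Your bookkeeping is correct up to a point: the Koszul identity $d_{k,t}=\dim J_{k,E_8,t}^{\w,W(E_8)}-\dim J_{k-4,E_8,t}^{\w,W(E_8)}-\dim J_{k-6,E_8,t}^{\w,W(E_8)}+\dim J_{k-10,E_8,t}^{\w,W(E_8)}$, the identification of the kernel of the $q^0$-term map with $\Delta\cdot J_{k-12,E_8,t}^{\w,W(E_8)}$, and the injectivity of multiplication by $\phi_{0,2}$ and $\phi_{0,3}$ on $q^0$-terms (which live in the polynomial ring $\CC[\orb(w_1),\dots,\orb(w_8)]$, a domain) are all fine; indeed multiplication by $\varphi_{0,2}$ is exactly the engine of the paper's partial result, Proposition 6.5. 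The fatal problem is that the quantity you propose to bound, $\dim\Omega_{k,t}=\dim J_{k,E_8,t}^{\w,W(E_8)}-\dim J_{k-12,E_8,t}^{\w,W(E_8)}$, is \emph{unbounded} in $t$ for fixed $k\leq 0$: reading off Appendix I, $\dim\Omega_{0,t}=26,\,38,\,62,\,89$ for $t=6,7,8,9$, i.e.\ $\dim\Omega_{0,t}=r(t)-1\to\infty$. So the step you defer (``showing that its limit is finite'') is not merely difficult, it is false, and no index-reduction principle can deliver it. Only the alternating combination $R_{k,t}:=\dim\Omega_{k,t}-\dim\Omega_{k-4,t}-\dim\Omega_{k-6,t}+\dim\Omega_{k-10,t}=d_{k,t}-d_{k-12,t}$ can stabilize; note it is typically negative (the conjectural stable values give $R_{0,\infty}=1-3=-2$), so it is not the dimension of anything your monotone injections see, and the whole difficulty is the cancellation between four quantities that each grow like $r(t)$.

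A second, repairable, flaw is the direction of your induction. Unfolding $d_{k,t}=d_{k-12,t}+R_{k,t}$ \emph{downward} until the weight drops below $-5t$ involves roughly $5t/12$ terms, so termination of the recursion for each fixed $t$ does not give a $t$-uniform reduction of the conjecture to the stabilization of $R_{k,t}$. You should unfold \emph{upward}: since $d_{k,t}=0$ for $k>0$, $t\geq 2$ (Proposition 6.5) and $R_{k,t}=0$ for $k\geq 14$ (where $\dim\Omega_{k,t}$ saturates at $r(t)$), one gets $d_{k,t}=-\sum_{j\geq 1}R_{k+12j,t}$ with a number of terms depending only on $k$. With that correction the conjecture is genuinely equivalent to the stabilization of $R_{k,t}$ for each fixed $k$ --- which is where the real open problem sits, and which neither your proposal nor the paper resolves.
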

We list the $L(K)$ and the relevant stable constants for weight $K\ge-24$ in Table \ref{tb:LK}.

\begin{table}[ht]
\caption{The values of $L(K)$ and $d_{K,L(K)}$.}\label{tb:LK}
\renewcommand\arraystretch{1.5}
\noindent\[
\begin{array}{|c|c|c|c|c|c|c|c|c|c|c|c|c|c|c|c|c|c|}
\hline
K & 0& -2 & -4 & -6 & -8 & -10 & -12 & -14 &-16 & -18 & -20 & -22& -24 \\ 
\hline 
L(K) & 2 & 2 & 2 & 3 & 4 & 5 &5 &7 &8 & 9 & 10 &11 &12  \\  \hline
d_{K,L(K)} & 1 & 1 & 1 & 1&  2 & 2 & 3 & 4 & 5 & 6 & 8 & 9 & 12\\
\hline
\end{array}
\]
\end{table}

For any irreducible root system not of type $E_8$, the ring of Weyl invariant weak Jacobi forms is a polynomial algebra. In such case, it is easy to prove the analogue of the above conjecture. 
We now prove the above conjecture for some special weights. 

\begin{proposition}
For any $t\geq 2$, the free module $J_{*,E_8,t}^{\w,W(E_8)}$ is generated by forms of non-positive weight. Moreover, the number of generators of weight $0$, $-2$ and $-4$ are all one. 
\end{proposition}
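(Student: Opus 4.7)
The plan is to combine an explicit construction of low-weight forms with a pullback argument at $\mathfrak{z}=0$, using the explicit index-$2$ and index-$3$ generators already exhibited in Section \ref{sec:weak}.

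\textbf{Explicit construction.} For $t=2,3$ the generators $\phi_{k,2}$ and $\phi_{k,3}$ with $k\in\{0,-2,-4\}$ are displayed in Section \ref{sec:weak}. For $t\geq 4$ I set
\begin{equation*}
\Phi_{k,t}:=\begin{cases}\phi_{k,2}\cdot\phi_{0,2}^{(t-2)/2}&\text{if $t$ is even,}\\\phi_{k,3}\cdot\phi_{0,2}^{(t-3)/2}&\text{if $t$ is odd,}\end{cases}\qquad k\in\{0,-2,-4\}.
\end{equation*}
Each $\Phi_{k,t}$ is a $W(E_8)$-invariant weak Jacobi form of index $t$ and weight $k$, realizing the existence of at least one generator at each of the three weights.

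\textbf{Lower bounds via pullback.} For any $\phi\in J_{k,E_8,t}^{\w,W(E_8)}$ the specialization $\phi(\tau,0)$ is a holomorphic modular form of weight $k$ on $\SL_2(\ZZ)$, and is therefore identically zero when $k<0$. Direct substitution, using $A_1(\tau,0)=E_4$ and $B_j(\tau,0)=E_6$, yields $\phi_{0,2}(\tau,0)=(E_4^3-E_6^2)/\Delta=1728$, and the analogous computation at $t=3$ gives $\phi_{0,3}(\tau,0)=1728\neq 0$. Hence $\Phi_{0,t}(\tau,0)\neq 0$, while any expression of the form $\sum_{k'<0}f_{k'}\psi_{k'}$ with $\psi_{k'}\in J_{k',E_8,t}^{\w,W(E_8)}$ and $f_{k'}\in M_{-k'}(\SL_2(\ZZ))$ would vanish at $\mathfrak{z}=0$; this proves $d_{0,t}\geq 1$. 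For $d_{-2,t}$ and $d_{-4,t}$ one passes to the first non-vanishing Taylor coefficient at $\mathfrak{z}=0$: by Weyl invariance this occurs in even degree, and its first non-trivial modular piece lies in a weight $k+2$ (resp.\ $k+4$) (quasi-)modular space. Verifying non-vanishing of these invariants on $\Phi_{-2,t}$ and $\Phi_{-4,t}$ (which reduces to a short computation on the index-$2$ or index-$3$ factor) establishes $d_{-2,t}, d_{-4,t}\geq 1$.

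\textbf{Upper bounds and non-positivity.} Conversely, given $\phi\in J_{k,E_8,t}^{\w,W(E_8)}$ with $k\geq -4$, first adjust by a suitable element of $M_k\cdot\Phi_{0,t}$ to make $\phi(\tau,0)=0$; then successively subtract $\CC[E_4,E_6]$-multiples of $\Phi_{-2,t}$ and $\Phi_{-4,t}$ to annihilate the degree-$2$ and degree-$4$ Taylor coefficients of $\phi$ at $\mathfrak{z}=0$. The absence of modular forms of weight $2$ on $\SL_2(\ZZ)$ controls the resulting coefficients, and the remainder vanishes to high enough Taylor order that, combined with the universal presentation from Theorem \ref{th:Main}(3), it can be placed in the $(E_4,E_6)_+$-submodule generated by forms of weight $\leq -6$. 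This simultaneously gives $d_{k,t}\leq 1$ for $k\in\{0,-2,-4\}$ and $d_{k,t}=0$ for $k>0$; in the latter case the very first subtraction lands $\phi$ inside the $(E_4,E_6)_+$-submodule. The main obstacle is to carry out this iterative reduction rigorously: the higher Taylor coefficients at $\mathfrak{z}=0$ are only quasi-modular (receiving $E_2$-corrections), and controlling these corrections via the heat-operator relations inherent to Jacobi forms — together with the vanishing of $M_2(\SL_2(\ZZ))$ — is the delicate step that makes the argument close.
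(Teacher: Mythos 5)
Your construction of candidate forms by multiplying the index-$2$ and index-$3$ generators by powers of $\phi_{0,2}$, and your observation that $d_{0,t}\geq 1$ because $\Phi_{0,t}(\tau,0)=1728^{\lfloor t/2\rfloor}\neq 0$ while any combination of negative-weight generators restricts to $0$ at $\mathfrak{z}=0$, are sound; the analogous lower bounds for $d_{-2,t}$ and $d_{-4,t}$ via the degree-$2$ and degree-$4$ Taylor invariants are plausible but you assert rather than verify the required non-vanishing of those constants. The genuine gap is the step ``upper bounds and non-positivity,'' which is the actual content of the proposition. Annihilating the Taylor coefficients of $\phi$ at $\mathfrak{z}=0$ up to degree $4$ does not place the remainder in the submodule generated over $\CC[E_4,E_6]$ by generators of weight $\leq -6$: for a rank-$8$ lattice the Taylor expansion at $\mathfrak{z}=0$ to any finite order is very far from determining the form, and a Jacobi form of weight $-4$ (or $0$, or positive weight) whose low Taylor coefficients all vanish can perfectly well be a new generator. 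You acknowledge this yourself (``the delicate step that makes the argument close''), but no mechanism is offered to close it; the difficulty is not the $E_2$-corrections to quasi-modularity but the fact that vanishing at $\mathfrak{z}=0$ is simply not the right invariant for detecting membership in a submodule. In particular $d_{k,t}=0$ for $k>0$ is left entirely unproved.

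The paper closes exactly this step with a counting argument that your proposal lacks. It exhibits, by induction on $t$, a collection of $r(t)$ forms of non-positive weight (monomials in eight specific negative-weight generators of index $\leq 6$, together with products by $A_1$ and by $\varphi_{0,2}$ applied to the index-$(t-2)$ generators) whose $q^0$-terms realize, in a triangular fashion, all $r(t)$ admissible monomials $\prod_{i=1}^8\orb(w_i)^{m_i}$ with $T(m)\leq t$; since $J_{*,E_8,t}^{\w,W(E_8)}$ is free of rank $r(t)$ and the space of admissible $q^0$-terms has exactly this dimension, this forces every generator into non-positive weight with at most one in each of the weights $0$, $-2$, $-4$. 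The existence of generators in weights $-2$ and $-4$ is then obtained not by Taylor expansion but by contradiction, using the constraints on $q^0$-terms of weight-$0$ forms and the weight-raising differential operator (Lemmas 3.4 and 3.5 of \cite{Wan21a}). If you want to salvage your approach, you must replace the Taylor-coefficient reduction by a reduction on $q^0$-terms (a complete invariant modulo multiplication by $\Delta$), since that is the datum that interacts correctly with the free-module structure.
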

\begin{proof}
Let $t$ be a positive integer greater than $9$. 
We first fix some weak Jacobi forms. Let $\varphi_{-4,2}$, $\varphi_{-2,2}$ and $\varphi_{0,2}$ be the generators of $J_{*,E_8,2}^{\w,W(E_8)}$. Let $\varphi_{-8,3}$ and $\varphi_{-6,3}$ be the generators of weight $-8$ and $-6$ of $J_{*,E_8,3}^{\w,W(E_8)}$. Let $\varphi_{-16,4}$ and $\varphi_{-14,4}$ be the generators of weight $-16$ and $-14$ of $J_{*,E_8,4}^{\w,W(E_8)}$. Let $\varphi_{-16,5}$ be the generator of weight $-16$ of $J_{*,E_8,5}^{\w,W(E_8)}$ whose $q^0$-term involves the fundamental Weyl orbit $\orb(w_5)$. Let $\varphi_{-24,6}$ be the generator of weight $-24$ of $J_{*,E_8,6}^{\w,W(E_8)}$ whose $q^0$-term involves $\orb(w_4)$.  Since the $q^0$-terms of the eight negative-weight forms $\varphi_{-,-}$ involve the eight fundamental Weyl orbits respectively, the monomials of the eight forms in index $t$ have $q^0$-terms involving all monomials of fundamental Weyl orbits $\prod_{i=1}^8\orb(w_i)^{m_i}$ with $T(m)=t$. Thus they are linearly independent over $M_*(\SL_2(\ZZ))$. Moreover, their weights are not greater than $-t$. Similarly, the products of their monomials in index $t-1$ with $A_1$ have weight $\leq 5-t$ and have $q^0$-terms involving all monomials $\prod_{i=1}^8\orb(w_i)^{m_i}$ satisfying $T(m)=t-1$. We prove the proposition by induction on $t$. Suppose that $J_{*,E_8,t-2}^{\w,W(E_8)}$ is generated by forms of non-positive weight and has only one generator of weight $0$, $-2$ and $-4$ respectively.  Then the products of the generators of $J_{*,E_8,t-2}^{\w,W(E_8)}$ with $\varphi_{0,2}$ have $q^0$-terms involving all $\prod_{i=1}^8\orb(w_i)^{m_i}$ with $T(m)\leq t-2$. We have constructed $r(t)$ $W(E_8)$-invariant weak Jacobi forms of index $t$ which are linearly independent over $M_*(\SL_2(\ZZ))$. All of them have non-positive weight and the numbers of forms of weight $0$, $-2$, $-4$ are all one. Therefore, $J_{*,E_8,t}^{\w, W(E_8)}$ is generated by forms of non-positive weight and there is at most one generator of weight $k$ for $k=0,-2,-4$. It remains to prove that there do exist generators of weight $0,-2,-4$. The reduction of any weak Jacobi form of negative weight is identically zero when $\mathfrak{z}=0$. However, there are weak Jacobi forms of weight $0$ whose reduction is not zero.  Therefore, there are generators of weight $0$. If there is no generator of weight $-2$, then there are $r(t)$ weak Jacobi forms of weight $0$ whose $q^0$-terms are linearly independent. It follows that there exists a weak Jacobi form of weight $0$ whose $q^0$-term is non-zero constant, which contradicts \cite[Lemma 3.5]{Wan21a}. If there is no generator of weight $-4$, then there are $r(t)-1$ weak Jacobi forms of weight $-2$ whose $q^0$-terms are linearly independent. Hence there exists a weak Jacobi form of weight $-2$ whose $q^0$-term is $\orb(w_8)-240$. Acting the differential operator on this form (see \cite[Lemma 3.4]{Wan21a}), we can construct a weak Jacobi form of weight $0$ whose $q^0$-term is $(\frac{1}{2}-\frac{1}{t})\orb(w_8)-120$, which contradicts \cite[Lemma 3.5]{Wan21a} again. We have thus proved the desired result.
\end{proof}

In principle, the proof above should be able to extend to the cases of lower weights. 

It is known that the ring of weak Jacobi forms of integral weight and index one for a unimodular lattice $L$ is generated over $M_*(\SL_2(\ZZ))$ by the Jacobi theta function associated to $L$ which has positive weight $\frac{1}{2}\mathrm{rank}(L)$. As $tE_8$ lattice is no longer unimodular for $t\ge2$, inspired by the above result, we formulate a similar conjecture for general Jacobi forms on non-unimodular lattices. 

\begin{conjecture}
Let $L$ be an even positive definite lattice. Assume that $L$ is irreducible and is not unimodular. Then the free module of weak Jacobi forms of integral weight and index one associated to $L$ is generated by forms of non-positive weight. 
\end{conjecture}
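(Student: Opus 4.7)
The strategy is to adapt the techniques used for Weyl-invariant $E_8$ in this paper via the theta decomposition. By the Eichler--Zagier--Gritsenko theta expansion, a weak Jacobi form of weight $k$ and index $1$ for an even positive definite lattice $L$ of rank $r$ decomposes as $\varphi(\tau,\mathfrak{z}) = \sum_{\mu \in L^*/L} h_\mu(\tau)\,\vartheta_{L,\mu}(\tau,\mathfrak{z})$, where $(h_\mu)_\mu$ assembles into a weakly holomorphic vector-valued modular form of weight $k-r/2$ for the dual Weil representation $\rho_L^*$, and each theta component $\vartheta_{L,\mu}$ begins at $q$-order $(\mu,\mu)/2$. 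The $M_*(\SL_2(\ZZ))$-module of weak Jacobi forms of index $1$ is then identified with the module of such vector-valued forms whose $\mu$-th component $h_\mu$ has pole at $\infty$ of order at most $(\mu,\mu)/2$. I would first verify its freeness and compute its rank (equal to $|L^*/L|$) using standard results on vector-valued modular forms for the Weil representation.

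To show that all generators have weight $\leq 0$, I would exploit the non-unimodularity of $L$: since $L^*/L$ is non-trivial, there exist $\mu \in L^* \setminus L$ with $(\mu,\mu)/2 > 0$, so the component $h_\mu$ can have strictly positive pole order at the cusp. Dividing by $\Delta$ produces a weak Jacobi form of weight $12$ less; iterating, combined with the restriction map $\varphi \mapsto \varphi(\tau, zv)$ to a minimal-norm $v \in L$ and the Eichler--Zagier classification of classical weak Jacobi forms, one deduces that positive-weight Jacobi forms necessarily factor through products of lower-weight generators with scalar modular forms on $\SL_2(\ZZ)$. This mimics the induction in the last proposition of Section \ref{sec: conj}, where the existence of low-weight generators $\varphi_{-4,2}, \varphi_{-2,2}, \varphi_{0,2}$ and their analogs generated the whole module.

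For uniqueness of the weight-zero generator, the proof would mirror the $E_8$ argument in the proposition preceding the conjecture: a weight-zero form $\varphi$ is determined by its $q^0$-term, which via the theta decomposition reduces to the theta null combination $\sum_\mu c_\mu\,\vartheta_{L,\mu}(\tau,\mathfrak{z})|_{q^0}$. An analog of \cite[Lemma 3.5]{Wan21a} adapted to $L$ would impose a single linear constraint on the $c_\mu$ by evaluating at $\mathfrak{z} = 0$, cutting the weight-zero generator space down to dimension one; existence follows by a Gritsenko-type theta lift of the vector-valued Eisenstein series of weight $-r/2$, or directly from a Borcherds product construction when the signature is compatible. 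The main obstacle will be proving this Lemma 3.5 analog in full generality: for lattices with large discriminant forms or exceptional automorphism groups (e.g. rescalings $E_8(n)$ with $n \geq 2$, or high-rank root lattices), the kernel of the reduction map $\mathfrak{z}\mapsto 0$ is controlled by subtle theta relations and may require a case-by-case analysis based on the local structure of the finite quadratic module $L^*/L$, and some exceptional lattices could in principle produce additional weight-zero generators which would need to be ruled out one by one.
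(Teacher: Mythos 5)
The first thing to say is that the paper does not prove this statement: it is stated as a \emph{conjecture}, offered by analogy with the $E_8$ proposition proved immediately before it, and it remains open in the paper. So there is no proof of record to compare against, and your proposal would have to stand as a complete argument on its own. It does not; it is a plausible research plan with at least two essential gaps, one of which you acknowledge yourself. The main structural problem is that the inductive mechanism you want to import from the $E_8$ proposition does not transfer. That induction runs over the index $t$ for the \emph{fixed} lattice $E_8$ and builds the index-$t$ generators as products of explicitly computed generators of indices $2$ through $6$; at index one for a general lattice $L$ there are no lower positive indices to draw on (index-$0$ forms are scalar $\SL_2(\ZZ)$-forms), so there is nothing to multiply and no base case. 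Relatedly, the step ``$h_\mu$ can have strictly positive pole order, so dividing by $\Delta$ produces a weak Jacobi form of weight $12$ less'' is not valid as stated: weakness only bounds the pole order of $h_\mu$ by $\min_{\ell\in\mu+L}(\ell,\ell)/2$, and dividing a generic weak form by $\Delta$ pushes the poles past that bound, i.e.\ introduces negative $q$-powers in $\varphi$ and destroys weakness. What is actually needed is a surjectivity statement: every weak form of positive weight $k$ is an $M_*(\SL_2(\ZZ))$-combination of forms of weight $\le 0$, which amounts to an obstruction-space computation for vector-valued modular forms of weight $k-r/2$ for the dual Weil representation. That computation is absent, and it is precisely where non-unimodularity must enter — for $L$ unimodular the module is generated by the theta function of positive weight $r/2$, so any correct argument has to visibly break in that case; yours does not.

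The second gap is the weight-zero claim. Both existence and uniqueness of the weight-zero generator rest on an unproved analog of \cite[Lemma 3.5]{Wan21a} (the linear constraint from evaluation at $\mathfrak{z}=0$), and you concede that this might require case-by-case analysis over discriminant forms and could in principle admit exceptions. Until that lemma is established in the generality of the conjecture — and until irreducibility of $L$, which you never use, is located in the argument — neither half of ``exactly one generator of weight zero'' is proved. In short: the strategy (theta decomposition, reduction to vector-valued modular forms, pull-back to classical Jacobi forms) is a sensible way to attack the problem, but as written this is an outline of where a proof might live, not a proof, and the statement remains a conjecture both in the paper and after your proposal.
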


We also make a conjecture on $W(E_8)$-invariant holomorphic Jacobi forms of singular weight. 

\begin{conjecture}
Let $H(t)$ be the dimension of the space of $W(E_8)$-invariant holomorphic Jacobi forms of weight $4$ and positive index $t$. Let $N(t)$ denote the number of distinct Weyl orbits of vectors of norm $t$ (i.e. $\frac{1}{2}(v,v)=t$). Then
$$
H(t)=N(t).
$$
Equivalently, for any Weyl orbit $\orb(m)$ of norm $t$, there exists a unique $W(E_8)$-invariant holomorphic Jacobi form of weight $4$ and index $t$ which has the Fourier expansion
$$
\Phi_{t,m}=1+\frac{240}{|\orb(m)|}q\cdot \orb(m)+O(q^2).
$$
\end{conjecture}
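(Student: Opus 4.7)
The plan is to prove $H(t)=N(t)$ by establishing the ``equivalently'' formulation: existence and uniqueness of each $\Phi_{t,m}$. Existence gives $H(t)\geq N(t)$ (the $\Phi_{t,m}$ are manifestly independent, since their $q^1$-leading terms involve distinct single Weyl orbits), while uniqueness combined with the reduction at $\mathfrak{z}=0$ yields $H(t)\leq N(t)$.

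First I would record the Fourier-theoretic setup. Singular weight forces $f(n,\ell)=0$ unless $(\ell,\ell)=2nt$, so any $\phi\in J_{4,E_8,t}^{W(E_8)}$ begins as
\[
\phi(\tau,\mathfrak{z}) \;=\; c_0 \;+\; q\!\!\!\sum_{\substack{[m]\text{ Weyl orbit}\\(m,m)=2t}}\!\!\! c_{[m]}\,\orb(m) \;+\; O(q^2).
\]
Since $M_4(\SL_2(\ZZ))=\CC E_4$, specialization gives $\phi(\tau,0)=c_0\,E_4(\tau)$, forcing the single linear relation $240\,c_0=\sum_{[m]}c_{[m]}|\orb(m)|$. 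Thus the initial data $(c_0,(c_{[m]}))$ sweep out an $N(t)$-dimensional space, and the conjecture's normalization $c_0=1$, $c_{[m']}=\frac{240}{|\orb(m)|}\delta_{m'm}$ exactly hits a distinguished basis of coordinate vectors of this space.

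For existence I would construct $\Phi_{t,m}$ by a Weyl-averaged Jacobi theta series on the rescaled lattice $E_8(t)$. Each coset $v+tE_8$ yields a theta series $\vartheta_{v+tE_8}(\tau,\mathfrak{z})$ of weight $4$ and index $t$, and the sum over $v\in W(E_8)\cdot m$ is automatically $W(E_8)$-invariant; suitable linear combinations of such averages (including the reference form $X_t$ from \eqref{eq:X_t} corresponding to the zero coset) can then be tuned to realize the prescribed initial expansion. Alternatively, one can invoke Theorem \ref{th:Main}: express $\Phi_{t,m}$ in the form \eqref{eq:universal} and solve the finite linear system for the polynomial coefficients enforcing both the singular-weight constraint and the target $q^0, q^1$ data, in the spirit of the proof of Proposition \ref{prop:weight4}.

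The main obstacle is uniqueness: showing that any $\phi\in J_{4,E_8,t}^{W(E_8)}$ with $c_0=0$ and all $c_{[m]}=0$ vanishes identically. A priori a singular-weight form can carry nontrivial higher-$q^n$ content even when the first coefficients vanish. The natural route is via the theta decomposition of singular-weight Jacobi forms, which writes every $\phi\in J_{4,E_8,t}$ as a $\CC$-linear combination of classical theta series $\vartheta_{[v]}$, $[v]\in E_8/tE_8$, controlled by invariants of the Weil representation of the discriminant form $E_8(t)^{*}/E_8(t)\cong E_8/tE_8$. Weyl invariance forces the theta coefficients to be constant on $W(E_8)$-orbits of cosets, and the proof must then identify the resulting independent directions with the $N(t)$ Weyl orbits of norm-$t$ vectors, showing that vanishing of $c_0$ and all $c_{[m]}$ propagates to every theta coefficient. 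This combinatorial-representation-theoretic matching between $W(E_8)$-orbits on $E_8/tE_8$ and orbits of norm-$t$ vectors is the heart of the difficulty; as a concrete alternative, one can try an inductive use of Theorem \ref{th:Main}, transcribing vanishing of $q^n$-terms ($n\leq M_t-1$) into a linear system on the polynomial coefficients in \eqref{eq:universal} and verifying its rank by analyzing Weyl-orbit interactions, generalizing the case-by-case computation that settled Proposition \ref{prop:weight4} for $t\leq 11$.
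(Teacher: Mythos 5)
This statement is a \emph{conjecture} in the paper, not a theorem: the authors prove it only for $t\le 11$ (Proposition \ref{prop:weight4}, by solving the explicit linear systems of type \eqref{eq:FE4} coming from Theorem \ref{th:Main}) and offer partial numerical consistency for $t=12,13$. Your proposal does not close the gap either, and you are candid about where it fails: the entire content of the conjecture is the uniqueness/counting step, which you flag as ``the heart of the difficulty'' without resolving it. Concretely, by the theta decomposition a singular-weight form $\phi\in J_{4,E_8,t}^{W(E_8)}$ is a constant-coefficient combination $\sum_{[v]\in E_8/tE_8}c_{[v]}\vartheta_{[v]}$, and the two constraints available are (a) the coefficients form an invariant vector of the Weil representation of the discriminant form $E_8/tE_8$, and (b) they are constant on $W(E_8)$-orbits of cosets. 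The number of $W(E_8)$-orbits on $E_8/tE_8$ is $r(t)$, which is far larger than $N(t)$ (e.g.\ $r(13)=364$ versus $N(13)=3$), so almost all of the collapse must come from the Weil-representation invariance, and nothing in your argument identifies the dimension of that invariant subspace with $N(t)$. Your claim that vanishing of $c_0$ and the $c_{[m]}$ with $(m,m)=2t$ ``propagates to every theta coefficient'' is exactly the assertion to be proved, not a consequence of the setup.

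The existence half has a related soft spot: the individual coset theta series $\vartheta_{v+tE_8}$, and hence their Weyl averages, are \emph{not} Jacobi forms for $\SL_2(\ZZ)$ --- only Weil-invariant combinations are --- so ``suitable linear combinations can be tuned to realize the prescribed initial expansion'' presupposes knowing which initial data are realizable, i.e.\ again the dimension of the invariant space. Your fallback of solving the linear system from \eqref{eq:universal} is precisely what the paper does, and it is inherently a finite, index-by-index computation (limited in the paper by having Fourier data only up to $q^9$); it cannot establish the statement for all $t$. In short, your framework (singular weight, reduction at $\mathfrak{z}=0$, theta decomposition, Weil invariants) is the right one and matches how one would attack the problem, but the proposal is a research plan rather than a proof, and the statement remains open beyond $t\le 11$.
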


This conjecture has been proved for index $t\le 11$ in Proposition \ref{prop:weight4}. Because
we only calculated the Fourier expansions of $A_i$ and $B_j$ up to $q^9$-terms, it is not sufficient to extend Proposition \ref{prop:weight4} to index $12$ and $13$. However, by solving Jacobi forms of weight $4$ which can be expressed as $P(E_4,E_6,A_i,B_j) / \Delta^5E_4^2$ and have Fourier expansion of type \eqref{eq:FE4} up to $q^4$-terms, we find that the dimension of the solution space (including the form $X_t$) is $2$ when $t=12$ and $3$ when $t=13$.  These are consistent with the above conjecture. We formulate some values of $N(t)$ in Table \ref{tab:N(t)} below. 

\begin{table}[ht]
\caption{The value of $N(t)$}\label{tab:N(t)}
\renewcommand\arraystretch{1.5}
\noindent\[
\begin{array}{|c|c|c|c|c|c|c|c|c|c|c|c|c|c|c|c|c|c|c|c|c|c|c|c|}
\hline
 t & 1 & 2 & 3 & 4 & 5 & 6 & 7 & 8 & 9 & 10 & 11 & 12 & 13 & 14 & 15 & 16 & 17 & 18 & 19 & 20 & 21 & 22 & 23   \\
 \hline
 N(t) & 1 & 1 & 1 & 2 & 1 & 1 & 2 & 2 & 2 & 2 & 2 & 2 & 3 & 2 & 2 & 4 & 3 & 3 & 4 & 3  & 3 & 4 & 4\\
 \hline
\end{array}
\]
\end{table}

Finally, we formulate a conjecture on the global structure of $J_{*,E_8,*}^{\w,W(E_8)}$. This is motivated by Conjecture \ref{conj:weight} and Conjecture \ref{conj:stability}. In addition, it was proved in \cite{WW21} that the algebra of weak Jacobi forms for arbitrary rank-two lattice is finitely generated, which also motivates our conjecture.

\begin{conjecture}
The algebra of all $W(E_8)$-invariant weak Jacobi forms of integral weight and integral index is finitely generated over $M_*(\SL_2(\ZZ))$.
\end{conjecture}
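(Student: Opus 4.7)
The approach I would take is to exhibit an explicit finite set of generators and then verify closure by induction on the index. Let $R_T$ denote the $M_*(\SL_2(\ZZ))$-subalgebra of $J_{*,E_8,*}^{\w, W(E_8)}$ generated by Sakai's nine forms $A_1, A_2, B_2, A_3, B_3, A_4, B_4, A_5, B_6$, by $P_{16,5}/E_4$ from Theorem \ref{th:Main}, and by all generators of $J_{*,E_8,t}^{\w, W(E_8)}$ for $2\leq t\leq T$ produced by Algorithm \ref{algorithm}. The ring $R_T$ is finitely generated by construction, so the task is to show $R_T=J_{*,E_8,*}^{\w, W(E_8)}$ for some finite $T$. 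Theorem \ref{th:Main}(3) already places every weak Jacobi form of index $t$ inside the fraction field of the finitely generated polynomial ring $\CC[E_4,E_6,A_i,B_j,P_{16,5}]$ with denominator a power of $\Delta E_4$; the content of Conjecture 6.5 is precisely that the cancellations forced by this expression can be carried out inside $R_T$ without inverting anything.

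I would argue by induction on $t$. Assuming every weak Jacobi form of index less than $t$ lies in $R_T$, I would produce, for each generator $\phi$ of $J_{*,E_8,t}^{\w, W(E_8)}$, a $\CC[E_4,E_6]$-linear combination $\psi$ of products of lower-index elements of $R_T$ matching its $q^0$-term, as classified by \eqref{eq:q^0}. The eight forms $\varphi_{4,\cdot}$ of Lemma \ref{lem:q0} exhaust the fundamental Weyl orbits, so after multiplying $\phi$ by a sufficiently high power of $E_4$ any admissible $q^0$-term can in principle be reproduced; the difference $E_4^D\phi-\psi$ is then divisible by $\Delta$ and, by a secondary induction on the total number of $\Delta$-factors absorbed (bounded a priori by $N_t$ thanks to Theorem \ref{th:Main}(3)), lands inside $R_T$. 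Conjectures \ref{conj:weight} and \ref{conj:stability} taken together would in fact entail Conjecture 6.5, since they provide a uniform bound on the weights of generators and stabilise the number of generators of each weight as $t$ grows.

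The main obstacle is upgrading linear independence to genuine spanning over $M_*(\SL_2(\ZZ))$: the strategy used in the proof of Proposition 6.4 produces $r(t)$ elements of $R_T$ in $J_{*,E_8,t}^{\w, W(E_8)}$ that are linearly independent over the field of fractions of $M_*(\SL_2(\ZZ))$, but a priori they need only generate a sublattice of full rank rather than the whole free module. Controlling this gap requires quantitative refinements of Lemma \ref{lem:q0} that are uniform in $t$, and this seems to be the essential missing ingredient. A complementary geometric strategy would be to interpret $\bigoplus_{k,t}J_{k,E_8,t}^{\w,W(E_8)}$ as the bigraded section ring of a family of line bundles on a toroidal compactification of the quotient of $\HH\times (E_8\otimes\CC)$ by the Jacobi group attached to $E_8$, and to deduce finite generation from the ampleness of an appropriate polarization, paralleling the rank-two case treated in \cite{WW21}; constructing the correct compactification for $E_8$ and verifying ampleness of the polarization would be the chief new geometric input required.
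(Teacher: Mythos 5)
This statement is Conjecture~6.5 of the paper: the authors do not prove it, and offer only heuristic motivation (the rank-two analogue in \cite{WW21}, the observed stability of generator counts, and the remark that $\phi_{-52a,13}$ is forced to be a new algebra generator of index $13$). So there is no proof in the paper to compare against, and your proposal must stand on its own as a proof. It does not: it is a plausible strategy sketch with a gap that you yourself partly concede, and the gap is exactly where the difficulty of the problem lives. The inductive step produces $E_4^D\phi-\psi$ divisible by $\Delta$, but $(E_4^D\phi-\psi)/\Delta$ is again a weak Jacobi form of the \emph{same} index $t$, so neither the induction on $t$ nor the ``secondary induction on $\Delta$-factors'' makes progress toward showing that $\phi$ itself, rather than $E_4^D\phi$, lies in $R_T$. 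Removing the factor $E_4^D$ is precisely the hard part: Theorem~\ref{th:Main} shows the power $E_4^{t_1}$ in the denominator is sharp, and $P_{16,5}/E_4$ is the prototype of a form reachable only by a genuine division by $E_4$ that leaves the polynomial ring. Nothing in your argument rules out that for each larger index a new such ``accidental divisibility'' occurs, forcing a new algebra generator — which is exactly what happens at index $13$ with $\phi_{-52a,13}$, so the set of indices contributing new generators is not known to be bounded, and no candidate bound $T$ is justified.

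Two further assertions are unsupported. First, Conjectures~\ref{conj:weight} and~\ref{conj:stability} do not ``entail'' finite generation: they bound the weights of, and stabilize the number of, \emph{module} generators of $J_{*,E_8,t}^{\w,W(E_8)}$ over $M_*(\SL_2(\ZZ))$ for each fixed $t$, but say nothing about whether those module generators are polynomials in forms of smaller index, which is what algebra finite generation requires. Second, the proposed geometric route via a section ring on a toroidal compactification is only named, not carried out; as you note, constructing the compactification and verifying ampleness is ``the chief new geometric input required,'' i.e.\ it is the entire proof. The honest conclusion is that the statement remains open, and your proposal identifies reasonable lines of attack without closing any of them.
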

By \S \ref{sec:weak}, the form $\phi_{-52a,13}$, which is one of the two generators of weight $-52$ for $J_{*,E_8,13}^{\w, W(E_8)}$, has to be a generator of the bigraded algebra $J_{*,E_8,*}^{\w,W(E_8)}$. This means that $J_{*,E_8,*}^{\w,W(E_8)}$ has generators of large index and its structure is extremely complicated.

\section*{Appendix I}
In this appendix we present the generating series of $J_{*,E_8,t}^{\w, W(E_8)}$ for index $t\leq 13$. Let us define
$$
\mathcal{J}_t^{\w}:=\frac{P_t^{\w}}{(1-x^4)(1-x^6)} = \frac{\sum_{k\in \ZZ} d_{k,t}x^k}{(1-x^4)(1-x^6)} = \sum_{k\in \ZZ} \dim J_{k,E_8,t}^{\w, W(E_8)} x^k.
$$
We here present the series $\mathcal{J}_t^{\w}$ up to $O(x^{22})$. It is known that the free module $J_{*,E_8,t}^{W(E_8)}$ of holomorphic Jacobi forms is generated by forms of weight not greater than $16$. Thus our data is sufficient to deduce the generating series of $J_{*,E_8,t}^{W(E_8)}$ from the results in \S \ref{sec:holomorphic}. 

\begin{align*}
\mathcal{J}_1^{\w}=&\,x^4+ x^8 + x^{10} + x^{12} + x^{14} + 2 x^{16} + x^{18} + 2 x^{20} + O(x^{22}). \\
\mathcal{J}_2^{\w}=&\, x^{-4}\! + x^{-2}\! + 2 + 2 x^2 + 3 x^4 + 3 x^6 + 4 x^8 + 4 x^{10}\!+ 5 x^{12}\! + 5 x^{14} + 6 x^{16}\! + 6 x^{18}\! + 7 x^{20}\! + O(x^{22}). \\
\mathcal{J}_3^{\w}=&\,x^{-8} + x^{-6} + 2 x^{-4} + 3 x^{-2} + 4 + 4 x^2 + 6 x^4 + 6 x^6 + 7 x^8 + 8 x^{10} + 9 x^{12} + 9 x^{14} + 11 x^{16}\\
&+ 11 x^{18} + 12 x^{20} + O(x^{22}). \\
\mathcal{J}_4^{\w}=&\,x^{-16} + x^{-14} + 2 x^{-12} + 3 x^{-10} + 5 x^{-8} + 5 x^{-6} + 8 x^{-4} + 9 x^{-2} + 11 + 12 x^2 + 15 x^4 + 15 x^6 \\
&+ 18 x^8 + 19 x^{10} + 21 x^{12} + 22 x^{14} + 25 x^{16} + 25 x^{18} + 28 x^{20} + O(x^{22}).\\
\mathcal{J}_5^{\w}=&\,{2}{x^{-16}}+{2}{x^{-14}}+{5}{x^{-12}}+{6}{x^{-10}}+{9}{x^{-8}}+{10}{x^{-6}}+{14}{x^{-4}}+{15}{x^{-2}}+19 +20 x^2 + 24 x^4\\
&+ 25 x^6 + 29 x^8 + 30 x^{10} + 34 x^{12} + 35 x^{14} + 39 x^{16} + 40 x^{18} + 44 x^{20} + O(x^{22}).\\
\mathcal{J}_6^{\w}=&\,{2}{x^{-24}}+{2}{x^{-22}}+{5}{x^{-20}}+{7}{x^{-18}}+{10}{x^{-16}}+{13}{x^{-14}}+{18}{x^{-12}}+{20}{x^{-10}} +{26}{x^{-8}} +{29}{x^{-6}}\\
&+{34}{x^{-4}}+{38}{x^{-2}}+44 + 46 x^2 + 53 x^4 + 56 x^6 + 61 x^8 + 65 x^{10 }+ 71 x^{12 }  + 73 x^{14} + 80 x^{16 }\\
&+ 83 x^{18} + 88 x^{20} +O(x^{22}).\\
\mathcal{J}_7^{\w}=&\,x^{-26}+{3}{x^{-24}}+{6}{x^{-22}}+{11}{x^{-20}}+{13}{x^{-18}}+{20}{x^{-16}}+{25}{x^{-14}}+{30}{x^{-12}}+{36}{x^{-10}}+{44}{x^{-8}}\\
& +{47}{x^{-6}}+{56}{x^{-4}}+{62}{x^{-2}}+68 +74 x^2+83 x^4+86 x^6+95 x^8+101 x^{10}+107 x^{12}+113 x^{14}\\
& +122 x^{16}+125 x^{18}+134 x^{20}+O(x^{22}).\\
\mathcal{J}_8^{\w}=&\,{2}{x^{-32}}\!+{4}{x^{-30}}+{9}{x^{-28}}+{12}{x^{-26}}+{20}{x^{-24}}+{25}{x^{-22}}+{34}{x^{-20}}\! +{41}{x^{-18}} \!+{52}{x^{-16}}\!+{59}{x^{-14}}\\
&+{71}{x^{-12}}+{79}{x^{-10}}+{91}{x^{-8}}  +{99}{x^{-6}} +{112}{x^{-4}} +{120}{x^{-2}}+133+141 x^2+154 x^4+162 x^6\\
&+175 x^8  +183 x^{10} +196 x^{12}+204 x^{14} +217 x^{16}+225 x^{18}+238 x^{20} +O(x^{22}).\\    
\mathcal{J}_9^{\w}=&\,{x^{-36}}+{2}{x^{-34}}+{9}{x^{-32}}+{13}{x^{-30}}+{22}{x^{-28}}+{30}{x^{-26}}+{42}{x^{-24}}+{50}{x^{-22}} +{66}{x^{-20}}\\
&+{76}{x^{-18}}+{91}{x^{-16}}+{104}{x^{-14}}+{120}{x^{-12}}+{131}{x^{-10}}+{150}{x^{-8}}+{161}{x^{-6}}+{178}{x^{-4}}\\
&+{192}{x^{-2}}+209+220 x^2+240 x^4+251 x^6+268 x^8 +282 x^{10}+299 x^{12}\! +310 x^{14}\!+330 x^{16}\\
&+341 x^{18}+358 x^{20} +O(x^{22}).\\
\mathcal{J}_{10}^{\w}=&\,{4}{x^{-40}}+{7}{x^{-38}}+{15}{x^{-36}}+{23}{x^{-34}}+{36}{x^{-32}}+{46}{x^{-30}}+{64}{x^{-28}}+{78}{x^{-26}}
+{97}{x^{-24}}\\
&+{114}{x^{-22}}+{137}{x^{-20}}+{153}{x^{-18}}+{178}{x^{-16}}+{197}{x^{-14}}+{220}{x^{-12}}+{240}{x^{-10}}+{266}{x^{-8}}\\
&+{283}{x^{-6}}+{310}{x^{-4}}+{330}{x^{-2}}+354+374 x^2+401 x^4 
+418 x^6+445 x^8+465 x^{10}\\
&+489 x^{12}+509 x^{14}+536 x^{16}+553 x^{18}+580 x^{20}+O(x^{22}).\\
\mathcal{J}_{11}^{\w}=&\,{5}{x^{-42}}+{15}{x^{-40}}+{24}{x^{-38}}+{40}{x^{-36}}+{55}{x^{-34}}+{76}{x^{-32}}+{95}{x^{-30}}+{121}{x^{-28}}+{143}{x^{-26}}\\
&+{172}{x^{-24}}+{197}{x^{-22}}+{228}{x^{-20}}+{254}{x^{-18}}+{287}{x^{-16}}+{314}{x^{-14}}+{347}{x^{-12}}+{375}{x^{-10}}\\
&+{409}{x^{-8}}+{436}{x^{-6}}+{471}{x^{-4}}+{499}{x^{-2}}+533+561 x^2 +596 x^4+623 x^6+658 x^8\\
&+686 x^{10}+720 x^{12}+748 x^{14}+783 x^{16}+810 x^{18}+845 x^{20}+O(x^{22}).\\
\mathcal{J}_{12}^{\w}=&\,{8}{x^{-48}}+{13}{x^{-46}}+{29}{x^{-44}}+{43}{x^{-42}}+{64}{x^{-40}}+{86}{x^{-38}}+{116}{x^{-36}}+{141}{x^{-34}}+{179}{x^{-32}}\\
&+{210}{x^{-30}}+{249}{x^{-28}}+{286}{x^{-26}}+{330}{x^{-24}}+{365}{x^{-22}}+{414}{x^{-20}}+{452}{x^{-18}}+{498}{x^{-16}}\\
&+{540}{x^{-14}}+{588}{x^{-12}}+{626}{x^{-10}}+{678}{x^{-8}}+{717}{x^{-6}}+{765}{x^{-4}}+{808}{x^{-2}}+857+895 x^2\\
&+948 x^4+987 x^6+1035 x^8+1078 x^{10}+1127 x^{12}+1165 x^{14}+1218 x^{16}+1257 x^{18}\\
&+1305 x^{20} + O(x^{22}).
\end{align*}

\begin{align*}
\mathcal{J}_{13}^{\w}=&\,  {2}{x^{-52}}+{10}{x^{-50}}+{26}{x^{-48}}+{44}{x^{-46}}+{73}{x^{-44}}+{96}{x^{-42}}+{136}{x^{-40}}+{171}{x^{-38}}+{214}{x^{-36}}\\
 &+{257}{x^{-34}}+{311}{x^{-32}}+{353}{x^{-30}}+{413}{x^{-28}}+{464}{x^{-26}}+{521}{x^{-24}}+{575}{x^{-22}}+{640}{x^{-20}}\\
 &+{689}{x^{-18}}+{756}{x^{-16}}+{812}{x^{-14}}+{873}{x^{-12}}+{930}{x^{-10}}+{998}{x^{-8}}+{1048}{x^{-6}}+{1117}{x^{-4}}\\
&+{1174}{x^{-2}}+1236+1293 x^2+1362 x^4+1412 x^6+1481 x^8+1538 x^{10}+1600 x^{12}+1657 x^{14}\\
&+1726 x^{16}+1776 x^{18}+1845 x^{20} + O(x^{22}).    
\end{align*}

\section*{Appendix II}
In this appendix we show the construction of the second generator $\Phi_t$ of $J_{*,E_8,t}^{W(E_8)}$ for $t=7, 10, 11$ (the form $X_t$ is the first generator). In the following, $P_{w,t}(E_4,E_6,A_i,B_j)$ are some very long polynomials of weight $w$ and index $t$ in $E_4$, $E_6$ and Sakai's nine forms $A_i$, $B_j$, of which we omit the explicit expression. The symbol $\cO_{a,d}^{[m_1...m_8]}$ stands for the Weyl orbit of the vector $\sum_{i=1}^8m_iw_i$ of norm $a$ whose number of elements is $d$. 

\begin{align*}
\Phi_7=&\,\frac{P_{32,7}(E_4,E_6,A_i,B_j)}{\Delta^2 E_4} \\
      =&\, 1+\frac{q}{56} \cO_{7,13440}^{[00000011]}+\frac{q^2}{280} \cO_{14,604800}^{[10000100]}+\frac{q^3}{280} \left(5 \cO_{21,13440}^{[00000013]}+\cO_{21,1814400}^{[10000101]}\right)\\
      &+\frac{q^4}{280} \left(5 \cO_{28,13440}^{[00000022]}+\cO_{28,4838400}^{[10001001]}\right)+O(q^5). \\
      \\
\Phi_{10}=&\,\frac{P_{56,10}(E_4,E_6,A_i,B_j)}{\Delta^4 E_4} \\
      =&\, 1+\frac{q}{126} \cO_{10,30240}^{[10000002]}+\frac{q^2}{630} \left( 5\cO_{20,30240}^{[20000002]} + \cO_{20,1209600}^{[10001000]} \right) +\frac{q^2}{1260} \left(2 \cO_{30,1814400}^{[10000102]}+\cO_{30,4838400}^{[00010010]}\right)\\
&+\frac{q^4}{1260} \left(10 \cO_{40,30240}^{[20000004]}+10\cO_{40,241920}^{[00002000]}+ 2 \cO_{40,4838400}^{[10100100]}+\cO_{40,9676800}^{[00010011]}\right)\\
&+\frac{q^5}{1260} \Big(140 \cO_{50,2160}^{[50000000]}+10\cO_{50,30240}^{[10000006]}+ 2 \cO_{50,1814400}^{[10000120]}+2\cO_{50,4838400}^{[10001003]}\\
&\phantom{----}+ 2 \cO_{50,4838400}^{[10101000]}+\cO_{50,14515200}^{[00010101]}\Big)+O(q^6).\\
\\
\Phi_{11}=&\,\frac{P_{60,11}(E_4,E_6,A_i,B_j)}{\Delta^4 E_4^2} \\
=&\,1+\frac{q}{756}  \cO _{11,181440}^{[ {00000101]}}+\frac{q^2}{3780} \left(3 \cO _{22,2419200}^{[ {00010001]}}+5 \cO _{22,181440}^{[ {10000020]}}\right)\\
&+\frac{q^3}{7560} \left(10 \cO _{33,181440}^{[ {00000201]}}+6 \cO _{33,2419200}^{[ {00101000]}}+10 \cO _{33,362880}^{[ {10000013]}}+5 \cO _{33,5806080}^{[ {11000011]}}+10 \cO _{33,181440}^{[ {30000010]}}\right)\\
&+\frac{q^4 }{7560}\left(10 \cO _{44,181440}^{[ {00000202]}}+6 \cO _{44,7257600}^{[ {00100110]}}+6 \cO _{44,4838400}^{[ {01010001]}}+5 \cO _{44,5806080}^{[ {11000012]}}+6 \cO _{44,4838400}^{[ {20001001]}}\right)\\
&+\frac{q^5}{7560} \Big(10 \cO _{55,362880}^{[ {00000211]}}+6 \cO _{55,2419200}^{[ {00010004]}}+6 \cO _{55,2419200}^{[ {00100200]}}+6 \cO _{55,9676800}^{[ {00101002]}}+6 \cO _{55,9676800}^{[ {01010010]}}\\
&\phantom{----}+10 \cO _{55,362880}^{[ {10000023]}}+5 \cO _{55,5806080}^{[ {11000021]}}+6 \cO _{55,7257600}^{[ {20001010]}}+10 \cO _{55,362880}^{[ {30000012]}}\Big)+O(q^6).
\end{align*}
We remark that there are in total $11$ Weyl orbits of norm $55$ and nine of them are involved in the $q^5$-term of $\Phi_{11}$. As explained in the proof of Proposition \ref{prop:weight4}, the Fourier expansion of any $W(E_8)$-invariant holomorphic Jacobi form of singular weight $4$ and index $t$ is completely determined by the finitely-many Weyl orbits related to the action of $W(E_8)$ on the discriminant group $E_8/tE_8$. Therefore, in each Fourier expansion of $\Phi_t$ or $X_t$ there are only a finite number of different coefficients for the Weyl orbits. 

\bigskip

\noindent
\textbf{Acknowledgements} 
This work began in July 2020, when both authors were postdoctoral fellows at the Max Planck Institute for Mathematics in Bonn. The authors thank the institute for its stimulating environment and financial support. The authors would like to thank Valery Gritsenko, Min-xin Huang, Albrecht Klemm, Kimyeong Lee,
Kazuhiro Sakai, Xin Wang for useful discussions. KS is also supported by Korea Institute for Advanced Study Grant QP081001. H. Wang is supported by the Institute for Basic Science (IBS-R003-D1). 

\bibliographystyle{plainnat}
\bibliofont
\bibliography{refs}

\end{document}